%
\documentclass{amsart}  
\usepackage{amssymb, tikz, float, enumitem, mathtools, afterpage}
\usepackage[all, cmtip]{xy}
\usetikzlibrary{calc,decorations.markings,matrix,arrows,positioning}
\tikzset{>=latex}


\newcommand{\A}{\mathbb{A}}


\theoremstyle{plain} \newtheorem{thm}{Theorem}[section]
\newtheorem{prop}[thm]{Proposition}
\newtheorem{lem}[thm]{Lemma}

\theoremstyle{definition} \newtheorem{defn}[thm]{Definition}
\theoremstyle{remark} 
\theoremstyle{plain} \newtheorem{claim}{Claim}
\theoremstyle{plain} 

\newenvironment{claimproof} {
  \begin{proof}[Proof of claim]
  
  } {
  \end{proof}
  }


\DeclareMathOperator{\Con}{Con}
\DeclareMathOperator{\Clo}{Clo}

\numberwithin{equation}{section}  
\renewcommand{\phi}{\varphi}
\renewcommand{\epsilon}{\varepsilon}

\usepackage{caption, subcaption, xparse}

\usepackage{framed}

\theoremstyle{definition} 
\theoremstyle{remark} 


\DeclareMathOperator{\Pol}{Pol}

\DeclareMathOperator{\face}{face}
\DeclareMathOperator{\cube}{cube}

\DeclareMathOperator{\Corners}{Corners}

\DeclareMathOperator{\faces}{Faces}
\DeclareMathOperator{\glue}{Glue}

\DeclareMathOperator{\tc}{TC}
\DeclareMathOperator{\cut}{Cut}

\DeclareMathOperator{\lines}{Lines}

\DeclareMathOperator{\refl}{Refl}

\DeclareMathOperator{\sym}{Sym}

\DeclareMathOperator{\tol}{Tol}

\DeclareMathOperator{\fin}{fin}

\DeclareMathOperator{\Cell}{Cell}


\def\nat{\mathbb{N}}
\def\A{\mathbb{A}}

\def\Meet{\bigwedge}
\def\Union{\bigcup}

\def\union{\cup}

\def\finsub{\subseteq_{\fin}}


\tikzset{myStyle/.style={baseline=(center.base), font=\small,
    every node/.style={inner sep=0.25em} }}

\NewDocumentCommand{\LinePic}{ O{} O{} O{1} }{ 
  \begin{tikzpicture}[myStyle, scale=#3*1 ]
    \node (center) at (0,0.5) {\phantom{$\cdot$}}; 
    \path (0,0)  node (s) {$#1$}
        ++(0,1)  node (n) {$#2$};
    \draw (n) -- (s);
  \end{tikzpicture}
}  

\newcommand{\SquareUnwrapped}[4]{ 
  \node (center) at (0.5,-0.5) {\phantom{$\cdot$}}; 
  \path (0,0)  node (nw) {$#2$}
      ++(1,0)  node (ne) {$#4$}
      ++(0,-1) node (se) {$#3$}
      ++(-1,0) node (sw) {$#1$};
  \draw (nw) -- (ne) -- (se) -- (sw) -- (nw);
}  
\NewDocumentCommand{\SquareXY}{ O{} O{} O{} O{} O{1} O{1} }{ 
  \begin{tikzpicture}[myStyle, xscale=#5*1, yscale=#6*1 ]
    \SquareUnwrapped{#1}{#2}{#3}{#4}
  \end{tikzpicture}
}  
\NewDocumentCommand{\Square}{ O{} O{} O{} O{} O{1} }{ 
  \SquareXY[#1][#2][#3][#4][#5][#5]
}  

\NewDocumentCommand{\SquareAxes}{ O{} O{} O{1} O{0} O{1} }{  
  \begin{tikzpicture}[myStyle, scale=#3*0.8] 
    \node (center) at (0.5,0.5) {\phantom{$\cdot$}}; 
    \draw (0,0) -- node[above]{$#1$} (1,0) node[right]{$#4$}
      (0,0) -- node[left]{$#2$} (0,1) node[above]{$#5$};
  \end{tikzpicture}
}  
\NewDocumentCommand{\CubeAxes}{ O{} O{} O{} O{} O{} O{} O{} }{  
  \begin{tikzpicture}[myStyle, scale=#4*0.85] 
    \node (center) at (0.5,0.75) {\phantom{$\cdot$}}; 
    \draw (0,0) -- node[above]{$#1$} (1,0) node[right]{$#5$}
      (0,0) -- node[left]{$#2$} (0,1) node[above]{$#6$}
      (0,0) -- node[below left=-0.25em]{$#3$} (0.5,-0.5) node[below right=-0.2em]{$#7$};
  \end{tikzpicture}
}  

\newcommand{\CubeNodes}[8]{  
  \node at (0.75,-0.75) (center) {\phantom{$\cdot$}}; 
  \path (0,0)  node (back_nw)      {$#2$}
      ++(1,0)  node (back_ne)      {$#4$}
      ++(0,-1) node (back_se)      {$#3$}
      ++(-1,0) node (back_sw)      {$#1$}
        (0.5,-0.5) node (front_nw) {$#6$}
      ++(1,0)  node (front_ne)     {$#8$}
      ++(0,-1) node (front_se)     {$#7$}
      ++(-1,0) node (front_sw)     {$#5$};
}  
\newcommand{\CubeUnwrapped}[8]{ 
  \CubeNodes{#1}{#2}{#3}{#4}{#5}{#6}{#7}{#8}
  \draw (back_nw) -- (back_ne) -- (back_se) -- (back_sw) -- (back_nw)
    (front_nw) -- (front_ne) -- (front_se) -- (front_sw) -- (front_nw)
    (back_nw) -- (front_nw)
    (back_ne) -- (front_ne)
    (back_se) -- (front_se)
    (back_sw) -- (front_sw);
}  
\newcommand{\CubeDUnwrapped}[8]{ 
  \CubeNodes{#1}{#2}{#3}{#4}{#5}{#6}{#7}{#8}
  \draw (front_nw) -- (front_ne) -- (front_se) -- (front_sw) -- (front_nw)
    (back_nw) -- (back_ne) (back_sw) -- (back_nw)
    (back_nw) -- (front_nw)
    (back_ne) -- (front_ne)
    (back_sw) -- (front_sw);
  \draw[densely dotted] (back_ne) -- (back_se) -- (back_sw)
    (back_se) -- (front_se);
}  
\NewDocumentCommand{\Cube}{ O{} O{} O{} O{} O{} O{} O{} O{} O{1} }{  
  \begin{tikzpicture}[myStyle, scale=#9*1 ]
    \CubeUnwrapped{#1}{#2}{#3}{#4}{#5}{#6}{#7}{#8}
  \end{tikzpicture}
}  

\NewDocumentCommand{\CubeDeep}{ O{} O{} O{} O{} O{} O{} O{} O{} O{1}  }{  
  \begin{tikzpicture}[myStyle, xscale=#9*1, yscale=1.5 ]
    \CubeUnwrapped{#1}{#2}{#3}{#4}{#5}{#6}{#7}{#8}
  \end{tikzpicture}
}  
\NewDocumentCommand{\CubeD}{ O{} O{} O{} O{} O{} O{} O{} O{} O{1} }{  
  \begin{tikzpicture}[myStyle, scale=#9*1 ]
    \CubeDUnwrapped{#1}{#2}{#3}{#4}{#5}{#6}{#7}{#8}
  \end{tikzpicture}
}  

\NewDocumentCommand{\DeltaZeroCubeD}{ O{} O{} O{} O{} O{} O{} O{} O{} O{1} }{  
  \begin{tikzpicture}[myStyle, scale=#9*1]
    \CubeDUnwrapped{#1}{#2}{#3}{#4}{#5}{#6}{#7}{#8}
    \draw (back_sw)  to[out=30,in=180-30] (back_se)
      (back_nw)  to[out=30,in=180-30] node[auto]{$\delta$} (back_ne)
      (front_sw) to[out=30,in=180-30] (front_se);
    \draw[dashed] (front_nw) to[out=30,in=180-30] (front_ne);
  \end{tikzpicture}
}  

\NewDocumentCommand{\DeltaOneCubeD}{ O{} O{} O{} O{} O{} O{} O{} O{} O{1} }{  
  \begin{tikzpicture}[myStyle, scale=#9*1]
    \CubeDUnwrapped{#1}{#2}{#3}{#4}{#5}{#6}{#7}{#8}
    \draw (back_sw)  to[out=120,in=240]node[left]{$\delta$} (back_nw)
      (back_se)  to[out=120,in=240]  (back_ne)
      (front_sw) to[out=120,in=240] (front_nw);
  \end{tikzpicture}
}  
\NewDocumentCommand{\DeltaTwoCubeD}{ O{} O{} O{} O{} O{} O{} O{} O{} O{1} }{  
  \begin{tikzpicture}[myStyle, scale=#9*1]
    \CubeDUnwrapped{#1}{#2}{#3}{#4}{#5}{#6}{#7}{#8}
    \draw (back_sw) to[out=180+30,in=180] node[auto,swap]{$\delta$} (front_sw)
      (back_se) to[out=0,in=30] (front_se)
      (back_nw) to[out=180+30,in=180] (front_nw);
    \draw[dashed] (back_ne) to[out=0,in=30] (front_ne);
  \end{tikzpicture}
}  


\begin{document}
\title{Mal'cev Complexes}
\author{ Andrew Moorhead}

\address[]{ Institue f{\" u}r Algebra, TU Dresden
  }
\email[]{andrew\textunderscore paul.moorhead@tu-dresden.de}

\thanks{Andrew Moorhead has been funded by the European Research Council (Project POCOCOP, ERC Synergy
Grant 101071674). Views and opinions expressed are however
those of the author only and do not necessarily reflect those of the European Union or the European Research
Council Executive Agency. Neither the European Union nor the granting authority can be held responsible for them.
}

\begin{abstract}
It is well known that an equivalence relation is invariant under the basic operations of an algebra if and only if it is invariant under the unary polynomials of the algebra. We show that a higher arity version of this property holds for a higher dimensional analogue of an equivalence relation. It follows that the hypercommutator of arity $n$ for an algebra is determined by its $n$-ary polynomials. We construct examples to show that this fails for every arity of the term condition higher commutator. 

\end{abstract}

\maketitle 

\section{Introduction}

In this article we investigate some basic properties of certain relations that are connected to the commutator and higher commutator theory for general algebraic structures. These relations are equipped with a kind of higher dimensional rectangular geometry which allows for the definition of higher dimensional versions of the symmetric, reflexive, and transitive properties ordinarily defined for binary relations. In the successful pursuit of a categorical commutator theory, Janelidze and Pedicchio define and study a two dimensional version of these relations in \cite{janelidzepedicchio}. They were defined for higher dimensions to study the higher commutator in \cite{taylorsupnil} and are called there \emph{higher dimensional equivalence relations}. If invariant under the operations of an algebra, a higher dimensional equivalence relation is called a \emph{higher dimensional congruence}.

Mal'cev noticed early on that the congruences of an algebra are exactly the equivalence relations that are invariant under the action of its unary polynomials. This observation makes it easy to generate a congruence from a set of pairs: simply symmetrize the set, close under the unary polynomials, and take the transitive closure. A connected chain of so-called \emph{basic translations} witnessing that two elements are related by this transitive closure is now referred to as a \emph{Mal'cev chain}. Here we extend this idea to higher dimensions. Specifically, we will show that an $(n)$-dimensional equivalence relation of an algebra is an $(n)$-dimensional congruence exactly when it is invariant under the $(n)$-ary polynomials. From this we deduce that the $(n)$-dimensional congruences of an algebra $\A$ are determined by the $(n)$-ary polynomials of the algebra.

We then connect this result to the study of higher commutators and centralizer conditions. The (binary) commutator and the study of affine, nilpotent, and solvable algebras form a constellation of connected ideas that plays an important role in Universal Algebra. Smith first defined a general commutator for Mal'cev varieties in \cite{jdhsmith}. This initiated a line of research on the topic in the context of congruence modular varieties, much of which is collected in the volume of Freese and McKenzie \cite{fm}. Gumm independently develops the theory of the modular commutator alongside a geometrical framework in \cite{gumm}. Kearnes and Kiss wrote a detailed monograph developing different commutator theories outside of the context of congruence modular varieties \cite{kearneskiss}.

There are many candidate definitions for a commutator, all of which coincide for modular varieties \cite{fm}. An important feature of the Universal Algebra commutator is a powerful representation theory of abelian algebras. Herrmann showed that the abelian algebras in a modular variety are exactly the affine algebras, which are those algebras that are polynomially equivalent to a module \cite{heraffine}. Quackenbush later suggested a commutator which equates abelianness with such a representation \cite{quackenbushquasiaffine}, although in general the most one can hope for is quasiaffine, or embeddable into the reduct of an affine algebra. This suggestion of a `linear' commutator was developed by Kearnes and Szendrei in \cite{kearnesszendreirel}, where they show that it is equal to the symmetric term condition commutator in a Taylor variety. While there are many candidates for a commutator, the term condition has become the most widely used, and the results of Kearnes and Szendrei guarantee that this is usually a good choice for Taylor varieties. 

Bulatov generalized the term condition commutator to a commutator of higher arity in \cite{buldef}. Aichinger and Mudrinksi develop the basic theory of the higher commutator for Mal'cev varieties in \cite{aichmud}. In the same paper the authors also use the higher commutator to define the important class of `supernilpotent' algebras, which are algebras for which the higher commutator of some arity outputs the minimal congruence when evaluated at the full congruence. While supernilpotent algebras in general need not be nilpotent \cite{mooremoorhead}, they are necessarily so in Taylor varieties \cite{taylorsupnil}.

The proof that supernilpotent Taylor algebras are nilpotent proceeds by defining a new commutator (called the `hypercommutator') and then demonstrating that the term condition commutator coincides with the hypercommutator in a Taylor algebra when evaluated at a constant tuple of congruences. The hypercommutator is defined with a centrality condition that is quantified over the higher dimensional congruence relation that is obtained by taking a multidimensional transitive closure over the set of cubes that are used to define the usual term condition higher commutator. The properties of higher dimensional congruences ensure that the hypercommutator satisfies a natural inequality relating nested terms. An explicit use of properties of higher dimensional congruences (in this case two dimensional) is found in the Kearnes, Szendrei, and Willard characterization of the commutator for difference term varieties \cite{kearnesszendreiwillardcharacterizing}.

In this paper we establish another nice property satisfied by the $(n)$-ary hypercommutator: it is completely determined by the $(n)$-ary polynomials of the algebra under investigation. This has been known to hold for the binary modular commutator for some time (in \cite{mckenzieparks} McKenzie shows that it suffices to check centrality for binary polynomials). It is shown in \cite{higherkissterms} that the hypercommutator is always equal to the term condition higher commutator for a modular variety, so this older property of the binary commutator could be viewed as a special case of our higher commutator result (in fact McKenzie's result is stronger, but it implies that the binary commutator is determined by the binary polynomials in the modular setting). 

We begin with an informal demonstration of the ideas (Section \ref{sec:informal}) before defining the necessary notation to prove our results in general (Section \ref{sec:generalresultscomplex}). In Section \ref{sec:commutators}, we define the term condition commutator and hypercommutator, argue that the hypercommutator is determined by polynomials of the same arity, and provide a family of examples to demonstrate that this is not true of the term condition commutator in general.

\section{Informal exposition}\label{sec:informal}

Because this is a collection of results that holds for all finite dimensions, the notation we use is unfortunately somewhat cumbersome. Therefore, we will start with an informal and low dimensional exposition. Because we are permitting ourselves this momentary informality, we omit a subtlety relating to a relaxation of reflexivity and the constants that we use when considering polynomial clones. 

Let us first consider something elementary. If $\theta$ is a transitive relation on some set $A$ and $\gamma \in A^n$ is some tuple of elements of $A$ indexed by $n$ satisfying $\langle \gamma_i, \gamma_{i+1} \rangle \in \theta$ for all $i \in n-1$ (note that we use that a natural number is the set of its predecessors), then the pair $\langle \gamma_0, \gamma_{n-1} \rangle$ comprised of the two endpoints of the tuple is an element of $\theta$. This is of course a routine application of the transitive property. This situation may be depicted as in Figure \ref{fig:1dtransitive}. In anticipation of later terminology, we call such a chain of $\theta$-pairs a $(1)$-dimensional rectangular complex. We will say that such a complex is contractible when it is comprised of pairs that belong to a transitive relation.

\begin{figure}
\includegraphics[scale=1]{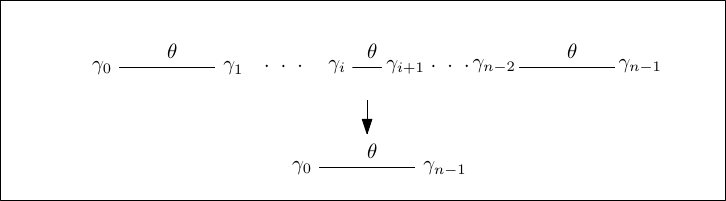}
\caption{Transitivity applied to a $(1)$-dimensional complex}\label{fig:1dtransitive}
\end{figure}

Now we consider the $(2)$-dimensional analogue. Informally, a $(2)$-dimensional relation on a set $A$ is a set of squares with vertices labeled by elements of $A$. We say that such a relation is $(2)$-transitive if we can glue squares together along a common edge either horizontally and vertically to obtain a new square of related elements. Put another way, all $(2)$-dimensional rectangular complexes consisting of such labeled squares are contractible, meaning the corners of the complex are also related. This is depicted in Figure \ref{fig:2dtransitive}. Here we are depicting that all labeled unit squares of some element $\gamma \in A^{n\times m}$ belong to a $(2)$-transitive relation $\theta$ and we conclude that the corners of the complex are also $\theta$-related. 

\begin{figure}
\includegraphics[scale=1]{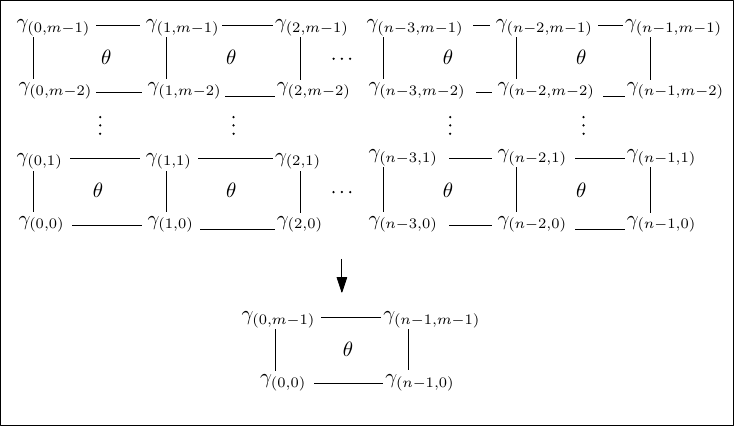}
\caption{Transitivity applied to a $(2)$-dimensional complex}\label{fig:2dtransitive}
\end{figure}

Returning to the $(1)$-dimensional case, let us now additionally suppose that $\A$ is an algebra and that $\theta$ is an equivalence relation on its underlying set $A$. In order to show that $\theta$ is a congruence of $\A$, it suffices to check its compatibility with the unary polynomials of $\A$. Indeed, suppose we are given (for example) a basic operation $t(x,y,z,w)$ of $\A$ with four arguments and four pairs $\langle a_0, b_0 \rangle, \dots , \langle a_3, b_3 \rangle$ that are $\theta$-related. Using the reflexivity of $\theta$ allows us to construct four $(1)$-dimensional rectangular complexes as depicted in Figure \ref{fig:1dunarypol}. Because we assume that $\theta$ is compatible with the unary polynomials of $\A$, it follows that the second complex is also a chain of $\theta$-related pairs. Now we can contract this chain to obtain the desired conclusion. 

\begin{figure}
\includegraphics[scale=1]{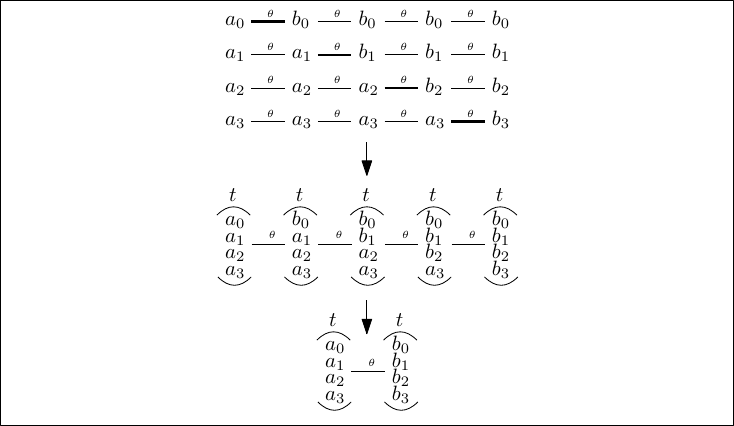}
\caption{Proving a $(1)$-dimensional equivalence relation is a congruence}\label{fig:1dunarypol}
\end{figure}

We again consider the $(2)$-dimensional analogue of this argument, the main components of which are depicted in Figure \ref{fig:2ybinarypol}. Suppose that $\theta$ is a $(2)$-dimensional equivalence relation, which we will define for all dimensions in Section \ref{sec:generalresultscomplex}, but for now describe informally. Here, the idea is that $\theta$ is a set of oriented squares with vertices labeled by elements of a set $A$, with the property that squares obtained from existing ones by reflecting over a horizontal or vertical line ($(2)$-symmetry), by duplicating a row or column ($(2)$-reflexivity), or contracting a $(2)$-dimensional rectangular complex ($(2)$-transitivity) again belong to $\theta$. 

Let us assume that we have such a $\theta$ and that $A$ is the underlying set of an algebra $\A$. We also assume that $\theta$ is compatible with the binary polynomials from $\A$. Suppose (for example) that $t(x,y,z,w)$ is an operation of $\A$ with four arguments. We want to show that $\theta$ is closed under $t$. At the top of Figure \ref{fig:2ybinarypol} we begin with four squares belonging to $\theta$. Each such square can be extended to a $(2)$-dimensional rectangular complex consisting of $\theta$ elements by repeatedly appealing to the $(2)$-dimensional reflexivity of $\theta$. These complexes are evaluated at the operation $t$. An inspection of the resulting complex reveals that each square belonging to this complex is the output of a binary polynomial of $\A$ evaluated at two elements belonging to $\theta$, so all such squares belong to $\theta$ by assumption. We then contract this complex to obtain the desired conclusion. 

\begin{figure}
\includegraphics[scale=1]{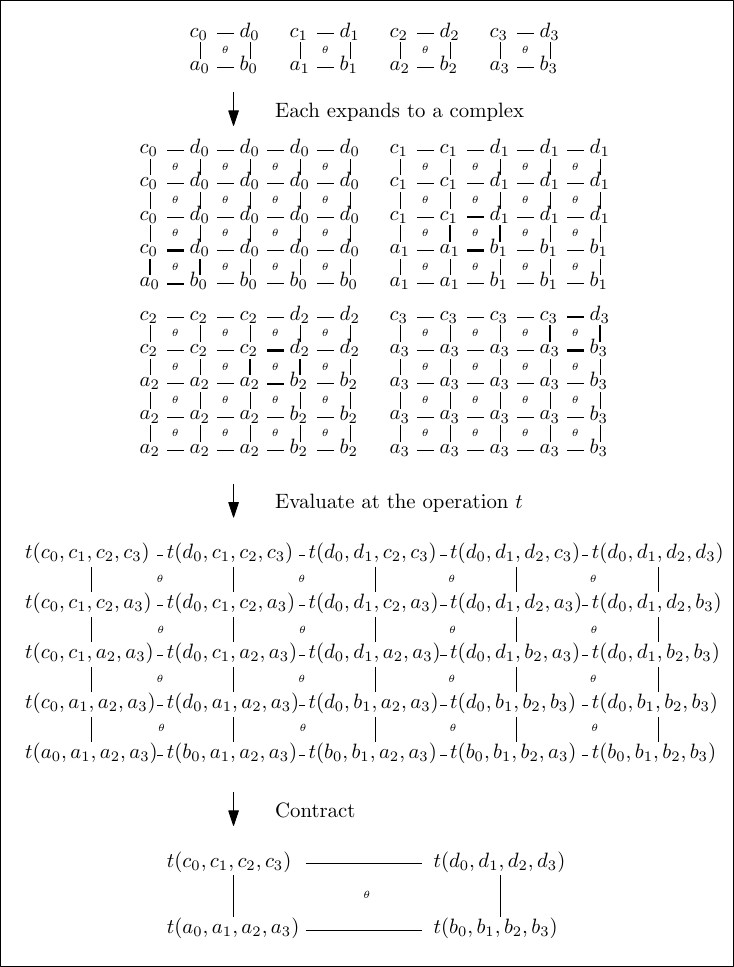}
\caption{Proving a $(2)$-dimensional equivalence relation is a $(2)$-dimensional congruence}\label{fig:2ybinarypol}
\end{figure}

We should mention that the other direction of these results is also easy. If an equivalence relation on the underlying set of an algebra is compatible with the basic operations of an algebra, then it is also compatible with the unary polynomials, because all but one of the arguments of a particular basic operation can be evaluated at constant pairs and such pairs belong to the equivalence relation because it is reflexive. This is equally easy to see in the $(2)$-dimensional case, because the $(2)$-dimensional reflexive property guarantees that constant squares belong to a $(2)$-dimensional equivalence relation (the general definitions  relax this to the constants belonging to the subalgebra determined by the labels of the vertices and we delay discussion of this detail until Section \ref{sec:generalresultscomplex}).


\section{Higher dimensional formalism}\label{sec:generalresultscomplex}
The purpose of this section is to formalize and prove the results of the previous section for all dimensions bigger than or equal to one. Many of the definitions given here are also given in \cite{taylorsupnil} and we recommend the reader review the more detailed exposition given there. We consider the set of natural numbers as the set of all finite ordinals ordered by set membership. That is, the natural number $n = \{0, \dots, n-1\}$ is the set of its predecessors. Given a finite set of natural numbers $S$, we may equip the set of functions $2^S$ with a natural edge relation where two functions are related when there is exactly one argument $k \in S$ on which they differ. This is a standard way to define a $|S|$-dimensional hypercube. Now, given a nonempty set $A$, we may view the set $A^{2^S}$ as having a natural set of $|S|$-dimensional hypercube coordinates. For this reason, we say that any set of such \emph{labeled $|S|$-dimensional cubes} is a $|S|$-dimensional relation. 

We often wish to partially specify a set of $Q \subseteq S$ coordinates while working with hypercubes. There is a natural bijection between $A^{2^S}$ and $(A^{2^{S\setminus Q}})^{2^Q}$ which we define as 

\begin{align*}
\cut_Q: A^{2^S} &\to  (A^{2^{S\setminus Q}})^{2^Q}\\
\gamma &\mapsto 
\left\{
\left\langle 
f,
\{
\langle 
g, \gamma_{f \union g}
\rangle
: g \in 2^{S\setminus Q}
\}
\right\rangle
: f\in 2^Q
\right\}
\end{align*}

This function provides different ways of representing labeled cubes as lower dimensional cubes with vertices that are labeled by other labeled cubes. We denote the inverse of $\cut_Q$ by $\glue_Q$. There are some pictures of the $(4)$-dimensional situation provided in \cite{taylorsupnil}. We only rely in this paper on the mappings

\begin{align*}
\faces_i &\coloneqq \cut_{\{i\}}\\
\lines_i &\coloneqq \cut_{S\setminus \{i\}}
\end{align*}
for a particular coordinate $i \in S$. 

The output of $\faces_i$ represents the input as a $(1)$-dimensional cube with vertices labeled by $(|Q|-1)$-dimensional labeled cubes and so we will treat the output as a pair of lower dimensional faces. If we wish to argue about one of these lower dimensional faces in particular, we need to specify whether $i =0$ or $i=1$. This is accomplished by a superscript, for example if $\gamma \in A^{2^3}$, then $\faces_0^0(\gamma)$ and $\faces_0^1(\gamma)$ are two squares labeled by elements of $A$. The first is obtained restricting to the coordinates $\{(0,i,j): i, j \in 2 \}$ and the second is obtained by consider the coordinates $\{(1,i,j): i, j \in 2 \}$. 

Dually, the output of $\lines_i$ represents a labeled cube as a lower dimensional cube with vertices labeled by pairs instead of single elements. We will use this representation to articulate the centrality condition with which the higher commutator is defined (see Definition \ref{def:centrality}).

With this terminology, we can make the following sequence of definitions. 

\begin{defn}[cf.\ Definition 2.1 of \cite{taylorsupnil}]
Let $B$ be a nonempty set and let $R \subseteq B^2$ be a binary relation on $B$. We say that $R$ is a quasiequivalence relation on $B$ provided that each of the following conditions hold:
\begin{enumerate}
\item $\langle a, b \rangle \in R$ implies $\langle a,a \rangle, \langle b,b \rangle \in R$ (quasireflexivity),
\item $\langle a,b \rangle \in R$ if and only if $\langle b,a \rangle \in R$. (symmetry), and
\item $\langle a,b \rangle, \langle b,c \rangle \in R$ imply that $\langle a,c \rangle \in R$ (transitivity).
\end{enumerate} 
\end{defn}

\begin{defn}\label{def:highercon}[cf.\ Definition 2.2 of \cite{taylorsupnil}]
Let $\A $ be an algebra with underlying set $A$ and let $R \subseteq A^{2^S}$ be a $(|S|)$-dimensional relation for some $S\finsub \nat$.
\begin{enumerate}
\item  $R$ is said to be \textbf{$(S)$-reflexive}, \textbf{$(S)$-symmetric}, or \textbf{$(S)$-transitive} if \newline $\faces_i(R)$ is respectively quasireflexive, symmetric, or transitive on $A^{2^{S\setminus \{i\}}}$ for each $i \in S$.

\item $R$ is said to be a \textbf{$(|S|)$-dimensional equivalence relation} provided \newline $\faces_i(R)$ is a quasiequivalence relation on $A^{2^{S\setminus \{i\}}}$ for each $i \in S$. 
\item $R$ is said to be a \textbf{$(|S|)$-dimensional congruence} of $\A$ if it is a $(|S|)$-dimensional equivalence that is also compatible with the basic operation of $\A$.
\item $R$ is said to be a \textbf{$(|S|)$-dimensional tolerance} of $\A$ if it is $(S)$-reflexive, $(S)$-symmetric, and compatible with the basic operations of $\A$.

\end{enumerate}

\end{defn}

The higher dimensional versions of reflexivity and symmetry can be described in terms of certain unary operations. For each $i \in S$ and $j \in 2$, we define the maps $\refl_i^j : A^{2^S} \to A^{2^S}$ and $\sym_i^j: A^{2^S} \to A^{2^S}$
by 
\begin{align*}
\refl_i^j(h) &= \glue_{\{i\}}(\langle \faces_i^j, \faces_i^j \rangle ) \text{ and }\\
\sym_i(h) &= \glue_{\{i\}}(\langle \faces_i^1, \faces_i^0 \rangle).
\end{align*}
The following lemma is an easy consequence of the definitions. 

\begin{lem}\label{lem:reflsymclosure}
Let $A$ be a nonempty set and $S \finsub \nat$. Let $R \subseteq A^{2^S}$ be a $|S|$-dimensional relation. The following hold:
\begin{enumerate}
\item $R$ is $(S)$-reflexive if and only if $R$ is closed under $\refl_i^j$ for all $(i,j) \in S \times 2$, and
\item $R$ is $(S)$-symmetric if and only if $R$ is closed under $\sym_i$ for every $i \in S$.

\end{enumerate}

\end{lem}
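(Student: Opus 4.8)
The plan is to unwind both definitions until they literally coincide, one coordinate at a time. The only structural fact needed is that for each $i \in S$ the map $\faces_i = \cut_{\{i\}} \colon A^{2^S} \to (A^{2^{S\setminus\{i\}}})^2$ is a bijection whose inverse is $\glue_{\{i\}}$. Two consequences will be used repeatedly: a pair $\langle a,b\rangle \in (A^{2^{S\setminus\{i\}}})^2$ lies in $\faces_i(R)$ if and only if the unique cube $\glue_{\{i\}}(\langle a,b\rangle)$ lies in $R$; and every pair of $\faces_i(R)$ is of the form $\langle \faces_i^0(\gamma), \faces_i^1(\gamma)\rangle$ for some $\gamma \in R$. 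Note also that no interaction between distinct coordinates is needed, since both the definitions of $(S)$-reflexive and $(S)$-symmetric, and the stated closure conditions, are conjunctions over $i \in S$; so it suffices to prove the equivalence for a single fixed $i$.

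For part (1): fixing $i \in S$ and $\gamma \in R$, write $a = \faces_i^0(\gamma)$ and $b = \faces_i^1(\gamma)$, so that $\langle a,b\rangle \in \faces_i(R)$. Since $\glue_{\{i\}}$ inverts $\faces_i$, we have $\refl_i^0(\gamma) = \glue_{\{i\}}(\langle a,a\rangle)$ and $\refl_i^1(\gamma) = \glue_{\{i\}}(\langle b,b\rangle)$, whence by the first consequence above $\refl_i^0(\gamma) \in R \iff \langle a,a\rangle \in \faces_i(R)$ and $\refl_i^1(\gamma) \in R \iff \langle b,b\rangle \in \faces_i(R)$. Letting $\gamma$ range over $R$ and using that every pair of $\faces_i(R)$ arises in this way, this says exactly that $R$ is closed under $\refl_i^0$ and $\refl_i^1$ if and only if $\faces_i(R)$ is quasireflexive on $A^{2^{S\setminus\{i\}}}$. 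Conjoining over $i \in S$ yields the equivalence of $(S)$-reflexivity with closure under all $\refl_i^j$, $(i,j) \in S \times 2$.

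Part (2) is the same bookkeeping. With $a$, $b$, $\gamma$ as above, $\sym_i(\gamma) = \glue_{\{i\}}(\langle b,a\rangle)$, so $\sym_i(\gamma) \in R \iff \langle b,a\rangle \in \faces_i(R)$. Hence closure of $R$ under $\sym_i$ is precisely the implication ``$\langle a,b\rangle \in \faces_i(R) \Rightarrow \langle b,a\rangle \in \faces_i(R)$'' quantified over all $\gamma \in R$; since $\sym_i$ is an involution (equivalently, since an implication quantified over all pairs is symmetric in the two sides), this implication is equivalent to the biconditional defining symmetry of $\faces_i(R)$. Conjoining over $i \in S$ finishes the proof.

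I expect no real obstacle: the entire content is matching up quantifiers and face indices. The two mild points worth stating explicitly are that the ``only if'' half of symmetry is free from the involutivity of $\sym_i$, and that each coordinate $i$ is handled independently so that no commutation relations among the $\refl_i^j$ and $\sym_i$ for different $i$ are required.
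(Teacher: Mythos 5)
Your argument is correct and is exactly the definition-unwinding the paper has in mind; indeed the paper offers no proof at all, stating only that the lemma ``is an easy consequence of the definitions,'' and your writeup (using that $\faces_i$ and $\glue_{\{i\}}$ are mutually inverse bijections, handling each coordinate $i$ independently, and noting that the ``if and only if'' in symmetry comes for free from quantifying over all pairs) is precisely that easy consequence made explicit.
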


Now we define rectangular complexes of such labeled cubes. Given a length $k \geq 1$ tuple of natural numbers $(n_0, \dots, n_{k-1})$ each greater than one, the product $n_0 \times \dots \times n_{k-1}$ is equipped with a natural rectangular graph structure, where the tuples $(i_0, \dots, i_{k-1})$ and $(j_0, \dots, j_{k-1})$ are edge related if they differ in exactly one coordinate $l$ and in this coordinate either $i_l = j_l +1 $ or $j_l = i_l +1$. We call such a graph a \emph{$k$-dimensional rectangular complex with dimensions $(n_0, \dots, n_{k-1})$}. Notice that if the dimensions $(n_0, \dots, n_{k-1})$ are all equal to $2$, then we recover a $k$-dimensional cube with vertex set $\underbrace{2 \times \dots \times 2}_{ k } = 2^k$. 

Given a set $A$ and a tuple of dimensions $(n_0, \dots, n_{k-1})$, we call an element 

\[
\gamma \in A^{n_0 \times \dots \times n_{k-1}}
\]
a \emph{labeled $k$-dimensional rectangular complex with dimensions $(n_0, \dots, n_{k-1})$}. The product $n_0 \times \dots \times n_{k-1}$ is a coordinate system for $\gamma$. For each coordinate tuple $f=(f_0, \dots, f_{k-1})$, we denote by $\gamma_f$ the value of $\gamma$ at $f$. Notice that if a labeled rectangular complex has dimensions $(n_0, \dots, n_{k-1})$ and each of these dimensions is equal to $2$, then we have recovered the definition of a labeled $(k)$-dimensional cube.

We use rectangular complexes to coordinatize and keep track of the higher dimensional transitive closures of relations that are coordinatized by higher dimensional cubes. Suppose that $\gamma \in A^{n_0 \times \dots \times n_{k-1}}$ is a labeled rectangular complex and suppose $f = (f_0, \dots, f_{k-1}) \in (n_0-1) \times \dots \times (n_{k-1}-1)$. We set 
$
\Cell_f(\gamma) \in A^{2^k}
$
to be the labeled $k$-dimensional cube satisfying 

\[
(\Cell_f(\gamma))_g = \gamma_{(f_0 + g_0, \dots, f_{k-1} + g_{k-1})}
\]
for each $g \in 2^k$. Every labeled $(k)$-dimensional rectangular complex may therefore be viewed as a collection of labeled $(k)$-dimensional cubes that coincide along certain $(k-1)$-dimensional faces. Indeed, it is immediate that

\[
\face_i^1(\Cell_{(f_0, \dots, f_i, \dots, f_{k-1})}(\gamma)) = \face_i^0(\Cell_{(f_0, \dots, f_i +1, \dots, f_{k-1})}(\gamma))
\]
for all choices of $i \in k$ and $f \in n_0 \times \dots \times n_{k-1}$ that make sense.

On the other hand, there is a labeled hypercube consisting of the corners of a labeled rectangular complex. Given $\gamma \in A^{n_0 \times \dots \times n_{k-1}}$, we set $\Corners(\gamma) \in A^{2^k}$ to be the labeled $k$-dimensional hypercube defined by 

\[
\Corners(\gamma)_g = \gamma_{(g_0(n_0-1), \dots, g_{k-1}(n_{k-1}-1))}
\]
for each $g \in 2^k$. 
\begin{prop}\label{prop:cornersinthetaifcellsintheta}
Let $A$ be a set, $k \geq 1$, and $\theta \subseteq A^{2^k}$ be a $k$-transitive set of labeled $k$-dimensional cubes. Suppose that $\gamma \in A^{n_0 \times \dots \times n_{k-1}}$ is a labeled $k$-dimensional rectangular complex. If $\Cell_f(\gamma) \in \theta$ for every $f \in (n_0 -1) \times \dots \times (n_{k-1} -1)$, then $\Corners(\gamma) \in \theta$. 
\end{prop}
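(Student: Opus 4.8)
The plan is to induct on the total size $N(\gamma) \coloneqq \sum_{i \in k}(n_i - 1)$ of the rectangular complex. Since each $n_i \geq 2$ we have $N(\gamma) \geq k$, with equality exactly when every $n_i = 2$; in that case $\gamma$ is itself a single labeled $k$-cube, its only cell is $\Cell_{(0, \dots, 0)}(\gamma) = \gamma$, and a direct check of the definition of $\Corners$ gives $\Corners(\gamma) = \gamma$, so the hypothesis yields $\Corners(\gamma) \in \theta$. This is the base case.

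For the inductive step I would pick a coordinate $j \in k$ with $n_j > 2$ and pass to the smaller complex $\gamma'$ with dimensions $(n_0, \dots, n_{j-1}, n_j - 1, n_{j+1}, \dots, n_{k-1})$ obtained by deleting the slab at index $1$ in direction $j$: set $\gamma'_f \coloneqq \gamma_f$ when $f_j = 0$ and $\gamma'_f \coloneqq \gamma_{f'}$ when $f_j \geq 1$, where $f'$ agrees with $f$ except that $f'_j = f_j + 1$. Since $n_j - 1 \geq 2$, this $\gamma'$ is a legitimate complex, and $N(\gamma') = N(\gamma) - 1$. Two things then need checking. First, $\Corners(\gamma') = \Corners(\gamma)$: corner indices use only $0$ and $n_i - 1$ in each coordinate, and in coordinate $j$ the value $n_j - 1$ is reached in $\gamma'$ at index $n_j - 2$, which the recipe sends back to $\gamma$-index $n_j - 1$, while index $0$ is untouched. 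Second, every cell of $\gamma'$ lies in $\theta$; once this is established the induction hypothesis applies to $\gamma'$ and gives $\Corners(\gamma') \in \theta$, completing the step.

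So the heart of the matter is that \emph{every cell of $\gamma'$ is in $\theta$}. If $g$ is a cell index of $\gamma'$ with $g_j \geq 1$, then unwinding the definitions shows $\Cell_g(\gamma') = \Cell_{g'}(\gamma)$ with $g'$ as above, which is in $\theta$ by hypothesis. If $g_j = 0$, then $\Cell_g(\gamma')$ occupies $\gamma$-indices $0$ and $2$ in direction $j$, and it is exactly the cube obtained by gluing $\Cell_g(\gamma)$ and $\Cell_{g'}(\gamma)$ along their common $(k-1)$-face: by the remark preceding the proposition $\face_j^1(\Cell_g(\gamma)) = \face_j^0(\Cell_{g'}(\gamma))$, so the gluing is legitimate and $\Cell_g(\gamma') = \glue_{\{j\}}\big(\langle \face_j^0(\Cell_g(\gamma)), \face_j^1(\Cell_{g'}(\gamma))\rangle\big)$. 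This is the one place the hypothesis is used: applying $\faces_j$ to the memberships $\Cell_g(\gamma) \in \theta$ and $\Cell_{g'}(\gamma) \in \theta$ puts the pairs $\langle \face_j^0(\Cell_g(\gamma)), \face_j^1(\Cell_g(\gamma))\rangle$ and $\langle \face_j^0(\Cell_{g'}(\gamma)), \face_j^1(\Cell_{g'}(\gamma))\rangle$ into the transitive relation $\faces_j(\theta)$, and since the second entry of the first pair equals the first entry of the second, transitivity delivers $\langle \face_j^0(\Cell_g(\gamma)), \face_j^1(\Cell_{g'}(\gamma))\rangle \in \faces_j(\theta)$, i.e. $\Cell_g(\gamma') \in \theta$.

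The only genuine obstacle is bookkeeping: keeping the reindexing recipe for $\gamma'$ consistent, checking that $\Corners$ is preserved on the nose, and correctly identifying each cell of $\gamma'$ either with a cell of $\gamma$ or with the $\theta$-gluing of two adjacent cells of $\gamma$. No new idea is needed beyond iterating the one-dimensional transitivity available in each coordinate direction; equivalently one could phrase the argument as ``contract fully in direction $0$, then in direction $1$, and so on,'' but the single induction on $N(\gamma)$ packages this most economically.
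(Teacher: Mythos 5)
Your proof is correct. It follows the same basic template as the paper's --- an induction that shrinks one coordinate at a time, with the base case being a single cube and one application of $k$-transitivity driving each step --- but the decomposition is genuinely dual to the paper's. The paper splits $\gamma$ along coordinate $0$ into two overlapping sub-complexes $\gamma^-$ (indices $0,\dots,n_0-2$) and $\gamma^+$ (indices $n_0-2,n_0-1$), applies the inductive hypothesis to \emph{both}, and then uses transitivity \emph{once}, at the top level, to glue $\Corners(\gamma^-)$ to $\Corners(\gamma^+)$. You instead apply transitivity at the bottom level --- merging each adjacent pair of cells at indices $0$ and $1$ in direction $j$ into a single $\theta$-cube, which costs one transitivity application per such pair --- and then invoke the inductive hypothesis only once, on the resulting smaller complex. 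The total work is the same; the paper's version keeps the ``$\theta$-membership of all cells'' hypothesis trivially inherited by the sub-complexes (no verification needed), whereas yours concentrates all the verification in the claim that every cell of $\gamma'$ lies in $\theta$, which you check correctly (the case $g_j\geq 1$ by reindexing, the case $g_j=0$ by the shared-face identity $\faces_j^1(\Cell_g(\gamma))=\faces_j^0(\Cell_{g'}(\gamma))$ and transitivity of $\faces_j(\theta)$). Your observation that $\Corners(\gamma')=\Corners(\gamma)$ is the analogue of the paper's identity $\faces_0^1(\Corners(\gamma^-))=\faces_0^0(\Corners(\gamma^+))$, and both are routine. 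Either argument is acceptable; the paper's is marginally shorter to write out because nothing needs to be re-verified about the sub-complexes, while yours makes the ``contract adjacent cells'' picture from Section 2 more literal.
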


\begin{proof}
The proof proceeds by induction on the poset of $k$-tuples of natural numbers, all greater than one, ordered by the rule $(n_0, \dots, n_{k-1}) \leq (m_0, \dots, m_{k-1})$ if and only if $n_i \leq m_i$ for each $i \in k$. The base case is clear, indeed, here every $n_i = 2$ and so $\Corners(\gamma)$ is equal to $\gamma$ which is equal to $\Cell_{(0,\dots,0)}(\gamma) \in \theta$. 

For the inductive step, we suppose that there exists $i \in k$ such that $n_i \neq 2$. Without loss, we assume that $i=0$. In this case we can decompose $\gamma$ into two complexes with strictly smaller dimensions and apply the inductive hypothesis to each. Define $\gamma^- \in A^{(n_0-1) \times n_1 \times \dots \times n_{k-1}}$ by

\[(\gamma^-)_f = \gamma_f 
\]
for $f \in (n_0-1) \times n_1 \times \dots \times n_{k-1}$ and define $\gamma^+$ by 

\[
(\gamma^+)_f = \gamma_{(n_0-2 + f_0, f_1, \dots, f_{k-1})}
\]
for $f \in 2\times n_1 \times \dots \times n_{k-1}$.
Both $\gamma^-$ and $\gamma^+$ satisfy the inductive hypothesis, so $\Corners(\gamma^-), \Corners(\gamma^+) \in \theta$. Furthermore, 

\[\faces_0^1(\Corners(\gamma^-)) = \faces_0^0(\Corners(\gamma^+)),\]
and because $\theta$ is a $(k)$-transitive relation, we deduce
\[\langle \faces_0^0(\Corners(\gamma^-)), \faces_0^1(\Corners(\gamma^+)) \rangle
\in \faces_0(\theta)
\]
or that, equivalently, $\Corners(\gamma) \in \theta$. 
\end{proof}

\begin{thm}\label{thm:compatiblewithnarypolys}
Let $\A$ be an algebra with underlying set $A$ and let $k \geq 1$. The $k$-dimensional congruences of $\A$ are exactly the $k$-dimensional equivalence relations $\theta$ on $A$ which are compatible with the $k$-ary polynomials of the subalgebra of $\A$ consisting of the elements of $\A$ that label cubes in $\theta$. 
\end{thm}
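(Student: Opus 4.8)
The forward direction is routine: if $\theta$ is a $k$-dimensional congruence, it is compatible with every basic operation of $\A$, hence with every polynomial of $\A$, and a fortiori with every $k$-ary polynomial of the subalgebra spanned by the labels in $\theta$ (the constants used in such a polynomial are themselves labels of cubes in $\theta$ since they come from that subalgebra, and $\theta$ is closed under the corresponding constant cubes by $(S)$-reflexivity applied $k$ times to produce constant cubes from the diagonal). I would dispatch this in a sentence or two. The substance is the converse: assuming $\theta$ is a $k$-dimensional equivalence relation compatible with all $k$-ary polynomials of the relevant subalgebra, show $\theta$ is compatible with every basic operation $t$ of $\A$.

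**The core argument.** Suppose $t$ is an $m$-ary basic operation and $\delta^0, \dots, \delta^{m-1} \in \theta$ are labeled $k$-cubes; I must show $t(\delta^0, \dots, \delta^{m-1}) \in \theta$ (applied coordinatewise over $2^k$). The idea, following the informal Section~\ref{sec:informal}, is to inflate each $\delta^j$ into a labeled $k$-dimensional rectangular complex all of whose cells lie in $\theta$, arrange these complexes so that $t$ can be applied cell-by-cell as a $k$-ary polynomial, and then contract using Proposition~\ref{prop:cornersinthetaifcellsintheta}. Concretely: for each input slot $j$, use $(S)$-reflexivity (Lemma~\ref{lem:reflsymclosure}, repeated applications of $\refl_i^j$) to build from $\delta^j$ a complex $\Gamma^j \in A^{n_0 \times \dots \times n_{k-1}}$ of suitable common dimensions in which $\delta^j$ has been ``stretched'' by inserting duplicated slabs, so that along the $j$-th family of slabs the complex genuinely varies (copying $\delta^j$) and elsewhere it is constant. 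Choose the dimensions and the arrangement of slabs so that each cell $\Cell_f(\Gamma^0), \dots, \Cell_f(\Gamma^{m-1})$ agrees with a constant cube (a single label from $\theta$) in all but one input coordinate — that is, for each $f$ there is at most one $j(f)$ with $\Cell_f(\Gamma^{j(f)})$ non-constant, the others being constant cubes. Then $\Cell_f\bigl(t(\Gamma^0, \dots, \Gamma^{m-1})\bigr) = t\bigl(\Cell_f(\Gamma^0), \dots, \Cell_f(\Gamma^{m-1})\bigr)$ is the image of the single non-constant $k$-cube $\Cell_f(\Gamma^{j(f)}) \in \theta$ under the $k$-ary polynomial $x \mapsto t(c_0, \dots, c_{j(f)-1}, x, c_{j(f)+1}, \dots, c_{m-1})$, whose constants $c_l$ are labels of cubes in $\theta$, hence lie in the designated subalgebra. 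By hypothesis this image is in $\theta$, so every cell of $t(\Gamma^0, \dots, \Gamma^{m-1})$ is in $\theta$. Applying Proposition~\ref{prop:cornersinthetaifcellsintheta} ($\theta$ is $k$-transitive as part of being a $k$-dimensional equivalence relation), $\Corners\bigl(t(\Gamma^0, \dots, \Gamma^{m-1})\bigr) \in \theta$; and the construction is rigged so that this corner cube is exactly $t(\delta^0, \dots, \delta^{m-1})$.

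**The main obstacle.** The delicate point — and the place where I would spend the most care — is the bookkeeping in the ``staircase'' inflation: defining the dimensions $(n_0, \dots, n_{k-1})$ and the explicit formulas for the $\Gamma^j$ so that (i) each $\Gamma^j$ is built from $\delta^j$ purely by $(S)$-reflexivity operations and so has all cells in $\theta$ (in fact each cell is either $\delta^j$ up to the insertion-of-duplicated-faces, hence in $\theta$ by closure under $\refl_i^j$, or a constant cube in $\theta$), (ii) for every cell index $f$ at most one of the $\Gamma^j$ is non-constant at $f$, and (iii) $\Corners(t(\Gamma^0,\dots,\Gamma^{m-1}))$ recovers $t(\delta^0,\dots,\delta^{m-1})$. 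The natural choice is to take each dimension $n_i$ large enough to accommodate, along each axis $i$, first a copy of $\delta^0$'s $i$-th direction, then a duplicated slab, then $\delta^1$'s, and so on — essentially placing the $m$ inputs along a diagonal staircase inside a grid — but the indexing for general $k$ and general arity $m$ of $t$ is genuinely fiddly, and one must verify the cell-agreement identities carefully. I would isolate the construction of the $\Gamma^j$ as a separate paragraph or sublemma, prove properties (i)–(iii) by direct computation with the coordinate formulas, and then the contraction step is immediate from Proposition~\ref{prop:cornersinthetaifcellsintheta}.
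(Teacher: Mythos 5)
Your overall architecture matches the paper's proof: inflate each argument cube into a rectangular complex by $(S)$-reflexivity, apply $t$ cell-by-cell, use polynomial compatibility to put every cell of the image complex into $\theta$, and contract with Proposition~\ref{prop:cornersinthetaifcellsintheta}; the forward direction is also handled as in the paper. But your key combinatorial claim (ii) --- that the slabs can be arranged so that for each cell index $f$ at most \emph{one} of the $\Cell_f(\Gamma^j)$ is non-constant --- is false whenever $k \geq 2$ and $t$ has at least two arguments, and this is exactly the point where the $k$-arity of the polynomials must enter. Each $\Gamma^j$ has to transition from the $0$-face to the $1$-face of $\delta^j$ along \emph{every} axis $i \in k$ (otherwise its corners cannot recover $\delta^j$), say at position $p^j_i$ along axis $i$. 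Take $j \neq j'$ and distinct axes $i \neq i'$: since the grid coordinates are independent there is a cell $f$ with $f_i = p^j_i$ and $f_{i'} = p^{j'}_{i'}$, and at this cell both $\Cell_f(\Gamma^j)$ and $\Cell_f(\Gamma^{j'})$ are non-constant (for generic $\delta^j, \delta^{j'}$ that actually vary in those directions). Concretely, for $k=2$ and $t$ binary, $\Gamma^0$ must vary horizontally at some column and $\Gamma^1$ vertically at some row, and the cell where that column meets that row sees both arguments vary; this is precisely the picture in Figure~\ref{fig:2ybinarypol}, whose interior cells are images of genuinely \emph{binary} polynomials. The map $x \mapsto t(c_0,\dots,x,\dots,c_{m-1})$ you invoke is a unary polynomial, and had claim (ii) been achievable the argument would show that $k$-dimensional congruences are determined by unary polynomials for every $k$ --- a much stronger statement than the theorem, which should have been a warning sign.

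The repair is the paper's diagonal arrangement together with the weaker, correct count. Setting $\zeta_i(\gamma_i)_f = (\gamma_i)_{(\chi_{[i+1,\infty)}(f_0),\dots,\chi_{[i+1,\infty)}(f_{k-1}))}$ on an $(n+1)^k$ grid, one checks that $\Cell_f(\zeta_i(\gamma_i))$ depends on axis $j$ if and only if $f_j = i$; hence the set of argument slots that are non-constant at a given cell $f$ is $\{i : i \in \{f_0,\dots,f_{k-1}\}\}$, of size at most $k$. Each cell of $t(\zeta_0(\gamma_0),\dots,\zeta_{n-1}(\gamma_{n-1}))$ is therefore the image of at most $k$ cubes of $\theta$ under a $k$-ary polynomial whose constants label cubes in $\theta$, and your remaining steps (cells of each $\zeta_i(\gamma_i)$ lie in $\theta$ by reflexivity, $\Corners$ commutes with $t$ and recovers $t(\gamma_0,\dots,\gamma_{n-1})$, contraction via Proposition~\ref{prop:cornersinthetaifcellsintheta}) then go through as you describe.
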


\begin{proof}
One direction is easy. Indeed, if $\theta \in A^{2^k}$ is a $(k)$-dimensional congruence of $\A$, then it is in particular compatible with the basic operations of $\A$ and $(k)$-reflexive. Suppose that $c \in A$ is the label of a vertex of some $\gamma \in \theta$ (i.e.\ that $c$ belongs to the subalgebra of elements that label cubes in $\theta$). A repeated application of the $(k)$-reflexivity of $\theta$ allows us to deduce that the cube with constant label $c$ is also an element of $\theta$. Therefore, $\theta$ is compatible with any $k$-ary polynomial of the subalgebra of $\A$ determined by these constants, because such polynomials come from terms of $\A$ with all but $k$-many arguments evaluated at such constants.

For the other direction, take $\theta \in A^{2^k}$ to be a $k$-dimensional equivalence relation that is compatible with the $k$-ary polynomials of $\A$ determined by elements of $A$ that label vertices of cubes in $\theta$. Take $t$ to be some basic operation of $\A$ of arity $n > k$. Take some $\gamma_0, \dots, \gamma_{n-1}$. We want to show that $t(\gamma_0, \dots, \gamma_{n-1}) \in \theta$. 

The intuition for the construction has been outlined in the earlier section. We first expand each $\gamma_i$ to a particular labeled rectangular complex of dimensions $(n+1)^k$, which we will call $\zeta_i(\gamma_i)$, as follows. For $i$ a natural number, let $\chi_{[i+1, \infty)}$ denote the characteristic function of the interval $[i+1, \infty)$. Now for any $\alpha \in A^{2^k}$ and $i \in n$, define the labeled $(k)$-dimensional rectangular complex $\zeta_i$ with dimensions $\underbrace{(n+1) \times  \dots \times (n+1)}_k$ as 

\[
\zeta_i(\alpha)_f \coloneqq \alpha_{(\chi_{[i+1, \infty)}(f_0),\dots, \chi_{[i+1, \infty)}(f_{k-1}) )},
\]
for each $f \in (n+1)^k$. 

Next we consider the complex $t(\zeta_0(\gamma_0), \dots, \zeta_{n-1}(\gamma_{n-1}))$. The full result will follow from the following sequence of claims. 
\begin{claim}
$\Corners(t(\zeta_0(\gamma_0), \dots, \zeta_{n-1}(\gamma_{n-1}))) = t(\gamma_0, \dots, \gamma_{n-1}).$
\end{claim}
\begin{claimproof} We have that
\[\Corners(t(\zeta_0(\gamma_0), \dots, \zeta_{n-1}(\gamma_{n-1})))=t(\Corners(\zeta_0(\gamma_0)), \dots, \Corners(\zeta_{n-1}(\gamma_{n-1}))),
\]
so we just need to see that $\Corners(\zeta_i(\gamma_i)))= \gamma_i$ for each $i \in n$. This follows from the definitions and the fact that $\zeta_i(\gamma_i))$ has dimensions $(n+1)^k$, while $i \in n$. 
\end{claimproof}
\begin{claim}

$\Cell_f(\zeta_{i}(\gamma_{i})) \in \theta$, for all $i \in n$ and $f \in n^k$.
\end{claim}

\begin{claimproof}\label{claim:cellbelongstotheta}
This follows from a repeated application of the $(k)$-reflexivity of $\theta$. Let us focus on a particular $i \in n$ and $f = (f_0, \dots, f_{k-1}) \in n^k$. We proceed by induction on the number of $j \in n$ with $f_j \neq i$. If this number is zero, then we have that $f = (\underbrace{i, \dots, i}_k)$ and in this case $\zeta_i(\gamma_i) = \gamma_i \in \theta$. Suppose that the result holds for $l$ many $j \in n$ and that $f_j \neq i $ for $l+1$ many $j$. Without loss, let us suppose that $f_0 \neq i$. By the inductive assumption we have that $\Cell_{f'}(\zeta_i(\gamma_{i})) \in \theta$, where $f' = (i, f_1, \dots, f_{k-1} )$. Because $f_0 \neq i$, it follows that $\chi_{[i+1, \infty)}(f_0) = \chi_{[i+1, \infty)}(f_0+1)$, so $\Cell_f(\zeta_{i}(\gamma_{i}))$ can be viewed as a pair of identical faces which are each equal to $\faces_0^0(\Cell_{f'}(\zeta_{i}(\gamma_{i})))$ or to $\faces_0^1(\Cell_{f'}(\zeta_{i}(\gamma_{i})))$. 

\end{claimproof}
\begin{claim}
$\Cell_f(t(\zeta_0(\gamma_0), \dots, \zeta_{n-1}(\gamma_{n-1}))) \in \theta$, for every $f \in n^k$.
\end{claim}

\begin{claimproof}
We have that 
\[
\Cell_f(t(\zeta_0(\gamma_0), \dots, \zeta_{n-1}(\gamma_{n-1}))) = t(\Cell_f(\zeta_0(\gamma_0)), \dots, \Cell_f(\zeta_{n-1}(\gamma_{n-1}))),
\]
so the claim will follow from the observation that at most $k$ many of the $\Cell_f(\zeta_{i}(\gamma_{i}))$ are nonconstant. To see this, let $f = (f_0, \dots, f_{k-1}) \in (n-1)^{k}$. Notice that $\Cell_f(\zeta_{i}(\gamma_{i}))$ depends on the coordinate $j \in k$ if and only if $f_j = i$ (else $f_j$ is too big or small for $\chi_{[i+1, \infty)}$ to detect the change from $f_j$ to $f_{j+1}$). Because $f$ has $k$ arguments, it follows that $k$ is largest number of nonconstant $\Cell_f(\zeta_{i}(\gamma_{i}))$ (this situation corresponds to injective $f$). Therefore,
\[
t(\Cell_f(\zeta_0(\gamma_0)), \dots, \Cell_f(\zeta_{n-1}(\gamma_{n-1})))
\]
is the output of one of the $k$-ary polynomials we assume $\theta$ to be compatible with. Because we showed in Claim \ref{claim:cellbelongstotheta} that each of the arguments belongs to $\theta$,  the claim is proved. 
\end{claimproof}
To finish the proof, we combine Claim 1, Claim 3, and Proposition \ref{prop:cornersinthetaifcellsintheta} to obtain that
\[
\Corners(t(\zeta_0(\gamma_0), \dots, \zeta_{n-1}(\gamma_{n-1}))) = t(\gamma_0, \dots, \gamma_{n-1}) \in \theta.
\]

\end{proof}

We conclude this section by generalizing the notion of a Mal'cev chain to all dimensions $k \geq 1$. Because of how we have chosen to define higher dimensional reflexivity, this procedure produces a higher dimensional congruence with constants determined by the subalgebra generated by the elements labeling the vertices of the generators. This discussion further develops the notation and discussion on pages 9-10 of \cite{taylorsupnil}.

Take $S \subseteq \nat$ be a finite set with at least one element and let $X\subseteq A^{2^S}$. We respectively define the $(|S|)$-dimensional congruence and $(|S|)$-dimensional tolerance of $\A$ generated by $X$ as

\begin{align*}
\Theta_S(X) &= \Meet \{R: R \text{ is a $(|S|)$-dimensional congruence and } X\subseteq R \}\\
\tol_S(X) &= \Meet \{R: R \text{ is a $(|S|)$-dimensional tolerance and } X\subseteq R \}.
\end{align*}
Suppose that $S = \{i_0, \dots, i_{n-1}\} $ is an enumeration of the elements of $S$. Let $Y \subseteq A^{2^S}$ be a $(|S|)$-dimensional relation. For $i \in S$ set 

\[ Y^{\circ_i } =  \glue_i( \faces_i(Y)^{\circ}),\]
where $\faces_i(Y)^{\circ}$ is the transitive closure of $\faces_i(Y)$ when interpreted as a binary relation. We recursively define

\begin{enumerate}
\item $\tc_{0}(Y) = Y^{\circ_{i_0}}$, and 
\item $\tc_{j}(Y) = \left(\tc_{j-1}(Y)\right)^{\circ_{i_{j\mod n}}}$, for $j > 0$.

\end{enumerate}
Finally, set $\tc(Y) = \Union_{j \in \nat} \tc_{i_j}(Y)$. 

The following lemma also appears in \cite{taylorsupnil}. Because it is not used in this paper, we omit item \emph{(3)} from the statement.

\begin{lem}[cf.\ Lemma 2.9 of \cite{taylorsupnil}] \label{prop:hcongenerate} 
Let $\A$ be an algebra and $S \finsub \nat$. The following hold.
\begin{enumerate}
\item If $R$ is a $(|S|)$-dimensional tolerance of $\A$, then $R^{\circ_i}$ is a $(|S|)$-dimensional tolerance of $\A$ and $R \subseteq R^{\circ_i}$.
\item
$\Theta_S(X) = \tc(\tol_S(X))$, for all $X \subseteq A^{2^S}$.

\end{enumerate}

\end{lem}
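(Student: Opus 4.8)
The plan is to establish \emph{(1)} by a hands-on verification of the three defining closure conditions of a $(|S|)$-dimensional tolerance, and then to obtain \emph{(2)} as a standard argument about the closure operators $\Theta_S$ and $\tol_S$, invoking \emph{(1)} at every stage of the iteration defining $\tc$. Two bookkeeping facts will be used constantly. First, $\faces_i$ and $\glue_i$ are mutually inverse bijections, so that $\faces_i(R^{\circ_i}) = \faces_i(R)^{\circ}$ and, more generally, a cube $\gamma$ lies in $R^{\circ_i}$ precisely when $\langle \faces_i^0(\gamma), \faces_i^1(\gamma)\rangle \in \faces_i(R)^{\circ}$. Second, for a coordinate $j \in S \setminus \{i\}$ the unary maps $\refl_j^{\epsilon}$ and $\sym_j$, as well as every basic operation $t$ of $\A$ applied coordinatewise, commute with both $\faces_i^0$ and $\faces_i^1$; this is immediate from the coordinatewise descriptions of all these maps. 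The inclusion $R \subseteq R^{\circ_i}$ is then immediate, since $\faces_i(R) \subseteq \faces_i(R)^{\circ}$ and $\glue_i$ is monotone.

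For \emph{(1)} I treat coordinate $i$ separately from the remaining coordinates. In coordinate $i$ we have $\faces_i(R^{\circ_i}) = \faces_i(R)^{\circ}$, the transitive closure of the symmetric, quasireflexive relation $\faces_i(R)$; the transitive closure of a symmetric relation is symmetric, and quasireflexivity is inherited because the first and last edges of any $\faces_i(R)$-chain already contribute the relevant diagonal pairs. Hence $\faces_i(R^{\circ_i})$ is symmetric and quasireflexive, so by Lemma \ref{lem:reflsymclosure} $R^{\circ_i}$ is closed under $\refl_i^{\epsilon}$ and $\sym_i$. For $j \neq i$, unwind the definition: $\gamma \in R^{\circ_i}$ exactly when there is a chain $\delta^0, \dots, \delta^{m-1} \in R$ with $\faces_i^0(\delta^0) = \faces_i^0(\gamma)$, $\faces_i^1(\delta^{m-1}) = \faces_i^1(\gamma)$ and $\faces_i^1(\delta^l) = \faces_i^0(\delta^{l+1})$ for every $l$. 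Applying $\refl_j^{\epsilon}$ or $\sym_j$ to every member of such a chain yields, by the commutation facts, a chain witnessing $\refl_j^{\epsilon}(\gamma), \sym_j(\gamma) \in R^{\circ_i}$; and, given $p$ cubes $\gamma^1, \dots, \gamma^p \in R^{\circ_i}$ with associated chains, one first pads the chains to a common length (inserting diagonal edges supplied by quasireflexivity of $\faces_i(R)$), applies a $p$-ary basic operation $t$ coordinatewise link by link, and again uses the commutation facts to see that the result is a chain witnessing $t(\gamma^1, \dots, \gamma^p) \in R^{\circ_i}$. By Lemma \ref{lem:reflsymclosure} this shows $R^{\circ_i}$ is $(S)$-reflexive, $(S)$-symmetric, and compatible with the basic operations of $\A$, i.e.\ a $(|S|)$-dimensional tolerance.

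For \emph{(2)} I prove the two inclusions. Iterating \emph{(1)} from the tolerance $\tol_S(X)$ shows that each $\tc_j(\tol_S(X))$ is a tolerance with $\tc_j(\tol_S(X)) \subseteq \tc_{j+1}(\tol_S(X))$, so $\tc(\tol_S(X))$ is an increasing union of tolerances; such a union is again $(S)$-reflexive and $(S)$-symmetric (these properties pass to unions via Lemma \ref{lem:reflsymclosure}) and compatible with the basic operations (any finitely many cubes lie in a common $\tc_j(\tol_S(X))$), and it contains $X$. It is also $(S)$-transitive: if $\langle a,b \rangle, \langle b,c \rangle \in \faces_i(\tc(\tol_S(X)))$ then both lie in $\faces_i(\tc_J(\tol_S(X)))$ for some $J$, hence $\langle a,c \rangle \in \faces_i(\tc_J(\tol_S(X)))^{\circ} = \faces_i\!\left(\tc_J(\tol_S(X))^{\circ_i}\right)$; since the coordinate $i$ is revisited infinitely often in the cyclic enumeration $i_0, i_1, \dots$, there is $J' > J$ with $\tc_{J'}(\tol_S(X)) \supseteq \tc_J(\tol_S(X))^{\circ_i}$ (by monotonicity of $R \mapsto R^{\circ_i}$ together with $\tc_{J'-1}(\tol_S(X)) \supseteq \tc_J(\tol_S(X))$), so $\langle a,c \rangle \in \faces_i(\tc(\tol_S(X)))$. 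Thus $\tc(\tol_S(X))$ is a $(|S|)$-dimensional congruence containing $X$, giving $\Theta_S(X) \subseteq \tc(\tol_S(X))$. Conversely, $\Theta_S(X)$ is itself a $(|S|)$-dimensional tolerance containing $X$, so $\tol_S(X) \subseteq \Theta_S(X)$; since $\faces_i(\Theta_S(X))$ is transitive we have $\Theta_S(X)^{\circ_i} = \Theta_S(X)$ for every $i \in S$, and a straightforward induction on $j$ using monotonicity of $R \mapsto R^{\circ_i}$ gives $\tc_j(\tol_S(X)) \subseteq \Theta_S(X)$ for all $j$, hence $\tc(\tol_S(X)) \subseteq \Theta_S(X)$.

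The genuinely routine ingredients are the coordinatewise commutation identities and the fact that intersections (resp.\ increasing unions) of tolerances are tolerances. The only two steps that need a moment of care are the padding of chains to a common length when checking compatibility of $R^{\circ_i}$ with operations in \emph{(1)}, and, in \emph{(2)}, recognizing that $(S)$-transitivity of $\tc(\tol_S(X))$ is precisely what the repeated cyclic re-visiting of each coordinate provides; I do not anticipate a serious obstacle beyond keeping the indices of the chains and of the enumeration of $S$ straight.
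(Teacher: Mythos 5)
Your proof is correct. The paper does not actually supply a proof of this lemma—it defers to Lemma 2.9 of \cite{taylorsupnil}—and your argument is a correct, self-contained reconstruction of the standard one: the two points that genuinely need care, namely padding the $\faces_i(R)$-chains to a common length (via quasireflexivity) before applying an operation link by link, and using the cyclic revisiting of each coordinate in the enumeration of $S$ to obtain $(S)$-transitivity of the increasing union, are both handled properly.
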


Suppose now that we are given a set $G \subseteq A^{2^S}$ and wish to explicitly characterize $\theta_S(G)$ in a manner similar to the characterization of ordinary congruences given by Mal'cev chains. Let $C(G)$ be the subalgebra generated by the elements of $A$ labeling cubes in $G$. In view of Lemma \ref{prop:hcongenerate}, we should first generate a $(|S|)$-tolerance in the algebra with operations from $\Pol_{|S|}(C(G))$ and then take an iterated transitive closure (item \emph{(2)} ensures that the result will be a $(|S|)$-dimensional congruence of the polynomial algebra). In view of Theorem \ref{thm:compatiblewithnarypolys}, this relation is also compatible with the basic operations of $\A$. We collect these observations in the following theorem. 

\begin{thm}\label{thm:generatehdimcon}
Let $\A$ be an algebra with underlying set $A$. Let $S \subseteq \nat$ be a finite set with at least one element. Let $G \subseteq A^{2^S}$ be a set of $(|S|)$-dimensional cubes labeled by elements of $A$. Let $C(G) \leq \A$ be the subalgebra of $\A$ generated by the elements of $A$ that label vertices of elements in $G$. The following is a procedure to produce $\Theta_S(G)$.

\begin{itemize}
\item Symmetrically and reflexively close $G$ (close $G$ under $\sym_i$ and $\refl_i^j$ for all $i \in S$ and $j \in 2$). 
\item Close the resulting set with $\Pol_{|S|}(C(G))$
\item Close the resulting set with $TC$. 
\end{itemize}
\end{thm}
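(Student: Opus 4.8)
The goal is to prove that the set $\Theta^{\ast}$ output by the procedure equals $\Theta_S(G)$, which means showing two things: that $\Theta^{\ast}$ is a $(|S|)$-dimensional congruence of $\A$ with $G\subseteq\Theta^{\ast}$, and that $\Theta^{\ast}\subseteq R$ for every $(|S|)$-dimensional congruence $R$ of $\A$ with $G\subseteq R$. Write $T_1$ for the symmetric–reflexive closure of $G$ (the smallest set containing $G$ that is closed under every $\sym_i$ and every $\refl_i^j$), write $T_2$ for the closure of $T_1$ under the coordinatewise action of the operations in $\Pol_{|S|}(C(G))$, so that $\Theta^{\ast}=\tc(T_2)$. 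A preliminary observation, used throughout, is that no step leaves $C(G)^{2^S}$: the maps $\refl_i^j$ and $\sym_i$ only repeat or permute coordinates, the operations in $\Pol_{|S|}(C(G))$ take values in the subalgebra $C(G)$, and forming transitive closures of the binary relations $\faces_i(\,\cdot\,)$ introduces no new components. Hence $C(G)=C(T_1)=C(T_2)=C(\Theta^{\ast})$, and every relation in sight is a subset of $C(G)^{2^S}$.

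The first main step is to recognize $T_2$ and $\Theta^{\ast}$ inside the polynomial algebra $\mathbb{P}=\langle C(G);\Pol_{|S|}(C(G))\rangle$, whose basic operations are precisely the $|S|$-ary polynomial operations of $C(G)$. I claim $T_2$ is the $(|S|)$-dimensional tolerance of $\mathbb{P}$ generated by $G$. One inclusion is immediate from Lemma \ref{lem:reflsymclosure}: any $(|S|)$-dimensional tolerance of $\mathbb{P}$ containing $G$ is $(S)$-reflexive and $(S)$-symmetric, hence closed under the $\refl_i^j$ and $\sym_i$, so it contains $T_1$; and it is compatible with the basic operations of $\mathbb{P}$, i.e.\ closed under the coordinatewise action of $\Pol_{|S|}(C(G))$, so it contains $T_2$. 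For the reverse inclusion I must check $T_2$ itself is $(S)$-reflexive and $(S)$-symmetric. The point is that each $\refl_i^j$ and each $\sym_i$ acts on $A^{2^S}$ as a reindexing $h\mapsto(g\mapsto h_{\sigma(g)})$ for a fixed $\sigma\colon 2^S\to 2^S$, and such a reindexing commutes with applying any operation coordinatewise, so the set $\{\, h\in T_2 : \refl_i^j(h),\ \sym_i(h)\in T_2 \text{ for all } i,j \,\}$ contains $T_1$ and is closed under $\Pol_{|S|}(C(G))$, hence equals $T_2$ by minimality. Thus $T_2$ is the $(|S|)$-dimensional tolerance of $\mathbb{P}$ generated by $G$, and Lemma \ref{prop:hcongenerate}(2), applied to $\mathbb{P}$, gives $\Theta^{\ast}=\tc(T_2)$ equal to the $(|S|)$-dimensional congruence of $\mathbb{P}$ generated by $G$; in particular $\Theta^{\ast}$ is a $(|S|)$-dimensional equivalence relation compatible with every operation in $\Pol_{|S|}(C(G))$.

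Now I would transfer back to $\A$. Since $C(\Theta^{\ast})=C(G)$, the operations in $\Pol_{|S|}(C(G))$ are exactly the $|S|$-ary polynomials of the subalgebra of $\A$ spanned by the labels of cubes in $\Theta^{\ast}$, so Theorem \ref{thm:compatiblewithnarypolys} upgrades $\Theta^{\ast}$ from a $(|S|)$-dimensional equivalence relation compatible with those polynomials to a $(|S|)$-dimensional congruence of $\A$; as $G\subseteq\Theta^{\ast}$, this yields $\Theta_S(G)\subseteq\Theta^{\ast}$. For the reverse inclusion, let $R$ be any $(|S|)$-dimensional congruence of $\A$ with $G\subseteq R$. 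Being a $(|S|)$-dimensional equivalence relation, $R$ is $(S)$-reflexive and $(S)$-symmetric, hence contains $T_1$. Next, every constant cube labeled by an element of $C(G)$ lies in $R$: collapsing any generator $g\in G$ via the $\sym_i$ and $\refl_i^j$ shows that each label appearing in $G$ yields a constant cube in $R$, and then $\{\, c\in A : \overline{c}\in R \,\}$ is a subuniverse of $\A$ (because $R$ is compatible with the basic operations of $\A$), hence contains $C(G)$. Writing each $p\in\Pol_{|S|}(C(G))$ in the form $t^{\A}(x_1,\dots,x_{|S|},c_1,\dots,c_m)$ with $t$ a term of $\A$ and $c_1,\dots,c_m\in C(G)$, compatibility of $R$ with the basic operations of $\A$ shows that $R$ is closed under the coordinatewise action of $\Pol_{|S|}(C(G))$, so $T_2\subseteq R$. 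Finally $R$ is $(S)$-transitive, so $\tc(R)=R$, and monotonicity of $\tc$ gives $\Theta^{\ast}=\tc(T_2)\subseteq R$. Hence $\Theta^{\ast}\subseteq\Theta_S(G)$, and equality follows.

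I expect the main obstacle to be the bookkeeping that glues the two algebras $\mathbb{P}$ and $\A$ together: one must maintain that every relation produced stays inside $C(G)^{2^S}$ so that the tolerance and congruence of $\mathbb{P}$ generated by $G$ are literally the sets $T_2$ and $\Theta^{\ast}$, and one must supply, in the two places where it is needed, the fact that all constant cubes over $C(G)$ belong to the relation under consideration, together with the commutation of $\refl_i^j$ and $\sym_i$ with coordinatewise operations. The genuine content is already carried by Theorem \ref{thm:compatiblewithnarypolys} and Lemma \ref{prop:hcongenerate}; the present statement is their assembly into an explicit generation procedure.
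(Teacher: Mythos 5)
Your proof is correct and follows the same route the paper takes (the paper only sketches it in the paragraph preceding the theorem): identify the output of the first two steps as the $(|S|)$-dimensional tolerance of the polynomial algebra $\langle C(G);\Pol_{|S|}(C(G))\rangle$ generated by $G$, apply Lemma \ref{prop:hcongenerate}(2) in that algebra to see that the transitive closure yields its generated $(|S|)$-dimensional congruence, and then invoke Theorem \ref{thm:compatiblewithnarypolys} to transfer compatibility back to the basic operations of $\A$. Your write-up supplies the details the paper leaves implicit (the commutation of $\refl_i^j$ and $\sym_i$ with coordinatewise operations, the constant cubes over $C(G)$, and the minimality argument), all of which check out.
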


\section{Different commutators}\label{sec:commutators}

In this section we define the term condition commutator and what we call the hypercommutator and discuss how our work in the previous sections impacts these commutators. Informally, the term condition commutator corresponds to a condition that is quantified over higher dimensional tolerances (which are usually called matrices in the literature), while the hypercommutator is a condition that is quantified over the higher dimensional congruence that generated by the matrices used to define the term condition commutator.

Let $\A$ be an algebra and $S \finsub \nat$ with $|S|\geq 1$. For each $i \in S$ define 
$
\cube_i: A^2 \to A^{2^S}
$
by 
\[
\cube_i(x,y)_f = 
\begin{cases}
x & \text{if } f_i = 0 \text{, and}\\
y & \text{if } f_i =1.
\end{cases}
\]
From the context it should be clear what the dimension of $\cube_i(x,y)$ is.

\begin{defn}\label{def:tolmatdeltadefinition}
Let $\A$ be an algebra and $S \finsub \nat$ with $|S|\geq 1$. Let $\{\theta_i\}_{i \in S} \subseteq \Con(\A)$ be an $S$-indexed set of congruences. Set

\begin{align*}
M(\{\theta_i\}_{i \in S}) &= \tol_S\bigg( \Union_{i\in S} \cube_i(\theta_i)
 \bigg) \text{, and}\\
\Delta(\{\theta_i\}_{i \in S}) &= \Theta_S\bigg( \Union_{i\in S} \cube_i(\theta_i) \bigg).
\end{align*}

\end{defn}

\begin{defn}\label{def:centrality}
Let $\A$ be an algebra, $\delta \in \Con(\A)$, $S\finsub \nat$ with $|S| \geq 2$, and $i\in S$. We say that a $(|S|)$-dimensional relation $R$ on $A$ has  \textbf{$(\delta,i)$-centrality} if there is no $\gamma\in R$ such that exactly $2^{|S|-1}-1$ many vertices of $\lines_i(\gamma)$ are labeled by $\delta$-pairs.
\end{defn}

\begin{defn}\label{def:tccomm}
Let $\A$ be an algebra and $S \finsub \nat$ with $|S|\geq 2$. Let $\{\theta_i\}_{i \in S} \subseteq \Con(\A)$ be an $S$-indexed set of congruences. Let $k$ be the greatest element of $S$. We define

\begin{align*}
[\{\theta_i\}_{i \in S}]_{TC} &= 
\Meet \{\delta: M(\{\theta_i\}_{i \in S}) \text{ has } (\delta, k)\text{-centrality} \} \\
[\{\theta_i\}_{i \in S}]_{H} &= \Meet \{\delta: \Delta(\{\theta_i\}_{i \in S}) \text{ has } (\delta, k)\text{-centrality} \}.
\end{align*}
We call these operations the $(|S|)$-ary \textbf{term condition commutator} and \textbf{hypercommutator}, respectively. In case $S =n $, we use the notation
$[\theta_0, \dots, \theta_{n-1}]_{TC}$ and $[\theta_0, \dots, \theta_{n-1}]_H$ for these operations.
\end{defn}

With these definitions, we can now state a corollary of Theorem \ref{thm:generatehdimcon}. 

\begin{thm}\label{thm:hyperdeterminedbypoly}
Let $\A$ be an algebra and let $k \geq 1$ be a natural number. The hypercommutator of arity $k$ for $\A$ is determined by the $k$-ary polynomials of $\A$. 
\end{thm}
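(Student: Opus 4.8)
The plan is to reduce the claim to Theorem~\ref{thm:generatehdimcon}. Fix the arity $k \geq 1$ and let $S = k$ (so $|S| = k$). By Definition~\ref{def:tccomm}, the hypercommutator $[\theta_0, \dots, \theta_{k-1}]_H$ is computed entirely from the relation $\Delta(\{\theta_i\}_{i \in k}) = \Theta_S\big(\bigcup_{i \in k} \cube_i(\theta_i)\big)$ together with the congruence lattice of $\A$: one intersects all $\delta \in \Con(\A)$ such that $\Delta(\{\theta_i\})$ has $(\delta, k-1)$-centrality. So it suffices to show that $\Delta(\{\theta_i\}_{i \in k})$ is determined by the $k$-ary polynomials of $\A$ (the congruence lattice $\Con(\A)$ is of course also determined by the unary, hence $k$-ary, polynomials, by the classical Mal'cev result). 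The generating set $G \coloneqq \bigcup_{i \in k}\cube_i(\theta_i)$ depends only on the congruences $\theta_i$, which are themselves fixed data; and $C(G)$, the subalgebra generated by the vertex-labels of $G$, is all of $\A$ (since each $\theta_i$ is reflexive, every element of $A$ appears as a label).

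The key steps, in order: (1) Observe that by Theorem~\ref{thm:generatehdimcon}, $\Theta_S(G)$ is obtained from $G$ by symmetrically and reflexively closing, then closing under $\Pol_{|S|}(C(G)) = \Pol_k(\A)$, then closing under $\tc$. Each of these three closure operations refers only to: the set $G$ (fixed), the operations $\sym_i, \refl_i^j$ (which are purely combinatorial reshufflings of cube coordinates, independent of the algebra), the clone $\Pol_k(\A)$, and the operation $\tc$ (again purely combinatorial: iterated transitive closures of binary relations $\faces_i(-)$, independent of the algebra). (2) Conclude that if $\A$ and $\A'$ are two algebras on the same underlying set $A$ with $\Pol_k(\A) = \Pol_k(\A')$, then for any $S$-indexed family of congruences $\{\theta_i\}$ common to both (note $\Con(\A) = \Con(\A')$ since unary polynomials coincide), we have $\Delta^{\A}(\{\theta_i\}) = \Delta^{\A'}(\{\theta_i\})$, because both are produced by the identical procedure of Theorem~\ref{thm:generatehdimcon}. (3) Since additionally $\Con(\A) = \Con(\A')$, the lattice over which the meet in Definition~\ref{def:tccomm} is taken is the same, and the centrality condition $(\delta, k-1)$-centrality of $\Delta(\{\theta_i\})$ is a property of the relation $\Delta(\{\theta_i\})$ and the set $\delta$ alone (Definition~\ref{def:centrality} is again purely set-theoretic). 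Hence $[\theta_0, \dots, \theta_{k-1}]_H$ computed in $\A$ equals that computed in $\A'$.

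The step requiring the most care — though it is not deep — is articulating precisely what ``determined by the $k$-ary polynomials'' means and checking that every ingredient entering the definition of the hypercommutator factors through $\Pol_k(\A)$: namely, that $\Con(\A)$ is recoverable from $\Pol_1(\A) \subseteq \Pol_k(\A)$, that the generators $\cube_i(\theta_i)$ depend only on the $\theta_i$, and that the three closure steps of Theorem~\ref{thm:generatehdimcon} together with the centrality test of Definition~\ref{def:centrality} involve no algebraic data beyond $\Pol_k(\A)$. I do not anticipate a genuine obstacle here: the entire content has been front-loaded into Theorem~\ref{thm:generatehdimcon}, and this corollary is essentially the observation that the recipe given there mentions the algebra only through its $k$-ary polynomial clone.
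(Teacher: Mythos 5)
Your proposal is correct and is exactly the argument the paper intends: the paper states this theorem as an immediate corollary of Theorem~\ref{thm:generatehdimcon}, and your elaboration (that $C(G)=\A$ by reflexivity of the $\theta_i$, that the symmetrization, reflexivization, and transitive-closure steps are purely combinatorial, that $\Con(\A)$ is recovered from $\Pol_1(\A)\subseteq\Pol_k(\A)$, and that the centrality test is set-theoretic) is precisely the justification left implicit there. No gaps.
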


On the other hand, Theorem \ref{thm:hyperdeterminedbypoly} does not hold for the term condition commutator. Let $\mathbb{C} = \langle \nat; t(x,y,z) \rangle$ be the algebra with a countably infinite underlying set and a single basic operation $t(x,y,z)$ defined as 

\begin{align*}
t(x,y,z) =
\begin{cases}
3 & (x,y,z) = (0,0,0)\\
3 & (x,y,z) = (1,2,0) \\
s(x,y,z)  & \text{otherwise}
\end{cases},
\end{align*} 
where $s(x,y,z)$ is an injection from $\nat^3$ into $\nat \setminus \{0,1,2,3\}$. The algebra $\mathbb{C}$ is not abelian, as witnessed by 

\[
\SquareXY[t(0,0,0)][t(1,2,0)][t(0,0,1)][t(1,2,1)][2][1] \in M(1,1).
\]
However, the algebra with underlying set $\nat$ and operations $\Pol_2(\mathbb{C})$ is term condition abelian. To see this, we first show that the following is true.

\begin{lem}\label{lem:example1}
Consider the algebra $\mathbb{C}$ defined above. Suppose that there exist $a,b, d \in \nat$ such that  $b \neq d$ and
\[
\Square[a][a][b][d] \in M(1,1)
\]
is produced from the generators of $M(1,1)$ by the term operation $s \in \Clo(\mathbb{C})$. It follows that $s$ has a term tree with $t(u,v,w)$ as a subterm, with some independent $u,v,w$ occurring among the variables of $s$. 
\end{lem}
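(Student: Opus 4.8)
The plan is to unwind the hypothesis into a small number of concrete evaluations of $s$, to locate a single occurrence of $t$ inside $s$ at which two of those evaluations collapse, and then to exploit how badly $t$ fails to be injective.

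First I would unpack the hypothesis: its content is that the given square equals $s$ applied to generators $\cube_{i_1}(x_1,y_1),\dots,\cube_{i_m}(x_m,y_m)$ of $M(1,1)$, one per variable of $s$ (the generators being the cubes $\cube_0(x,y),\cube_1(x,y)$, $x,y\in\nat$, a set already closed under the reflexive and symmetric operations of Lemma~\ref{lem:reflsymclosure}, so this is the general form of an element of $M(1,1)$). Call a variable $z_j$ of $s$ a \emph{block $0$} or \emph{block $1$} variable according as $i_j$ is $0$ or $1$. Evaluating the square at its four corners gives four substitution instances of $s$, and since the two corners labelled $a$ are adjacent they agree in one cube coordinate and differ in the other; after possibly swapping the two coordinates and swapping $x\leftrightarrow y$ within block $0$, this yields assignments $\mu$ (every variable at its $x$-value) and $\mu'$ (block $0$ variables at $y$, block $1$ variables at $x$) that agree on every block $1$ variable, differ only on the block $0$ variables $z_j$ with $x_j\neq y_j$, and satisfy $s(\mu)=s(\mu')=a$. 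Meanwhile the opposite edge of the square gives two assignments differing only on block $0$ variables but with distinct $s$-values $b$ and $d$, so at least one block $0$ variable with $x_j\neq y_j$ genuinely occurs in $s$.

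Next I would run a tree-walk comparing $\mu$ and $\mu'$. Call a node $N$ of the term tree of $s$ \emph{unstable} if the subterm rooted at $N$ takes different values under $\mu$ and $\mu'$; by the previous step some leaf is unstable whereas the root is stable ($s(\mu)=s(\mu')=a$), so walking from an unstable leaf up to the root there is a first unstable node $M$ whose parent $N=t(\alpha_1,\alpha_2,\alpha_3)$ is stable. Then the triples $(\alpha_1(\mu),\alpha_2(\mu),\alpha_3(\mu))$ and $(\alpha_1(\mu'),\alpha_2(\mu'),\alpha_3(\mu'))$ differ (in the slot occupied by $M$) but have equal image under $t$, and the only pair of distinct triples on which $t$ agrees is $\{(0,0,0),(1,2,0)\}$, so these two triples are $(0,0,0)$ and $(1,2,0)$ in some order. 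At whichever of $\mu,\mu'$ gives $(0,0,0)$ each $\alpha_k$ evaluates to $0$; since $\im t\subseteq\{3\}\cup(\nat\setminus\{0,1,2,3\})=\nat\setminus\{0,1,2\}$ and $t$ is the only operation symbol, no $\alpha_k$ can be a compound term, so each $\alpha_k$ is a variable of $s$. At the other assignment the three values are $1,2,0$, pairwise distinct, so $\alpha_1,\alpha_2,\alpha_3$ are pairwise distinct, and $t(\alpha_1,\alpha_2,\alpha_3)$ is a subterm of $s$ whose arguments are three independent variables of $s$. Tracing where the values $0,1,2$ come from, one sees moreover that two of the $\alpha_k$ are block $0$ variables carrying generators with $x_j\neq y_j$ and the third also carries a generator rather than a constant baked into $s$ --- presumably the form in which the lemma is applied to the polynomial algebra.

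The one delicate point is the tree-walk: confirming that an unstable leaf exists (exactly where the hypothesis $b\neq d$ is used, via ``block $0$ genuinely affects $s$'') and that the first return to stability up the tree must happen at a $t$-node displaying the collapse. Everything after that is forced by two crude facts about $t$ --- its only collapse is $(0,0,0)\leftrightarrow(1,2,0)$, and none of $0,1,2$ is in the range of $t$.
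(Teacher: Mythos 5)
Your proof is correct, and it pivots on exactly the two facts that drive the paper's argument: the only collision of $t$ is $t(0,0,0)=t(1,2,0)$, and $0,1,2$ do not lie in the image of $t$ (so the three arguments of a colliding occurrence of $t$ must be leaves, and the pairwise distinct values $1,2,0$ force those leaves to be three distinct variables). The difference is in how the colliding occurrence is located. The paper argues top-down by induction on the height of the term tree, splitting at the root according to whether the two evaluations along the constant edge already agree argument-by-argument (recurse into a child square that still has $a_i=c_i$ and $b_i\neq d_i$) or agree only after applying $t$ (the collision is at the root, so the children are variables). You argue bottom-up and without induction: the hypothesis $b\neq d$ guarantees a leaf on which the two assignments $\mu,\mu'$ realizing the constant edge disagree, and following the root path from such a leaf to the first node on which $\mu,\mu'$ agree lands on the desired occurrence of $t$, since every internal node is a $t$-node. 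Both routes are sound and find the same kind of witness; yours additionally records which arguments of that occurrence vary between the two assignments, which is a mild sharpening. Your final remark about a third argument carrying ``a generator rather than a constant baked into $s$'' is not needed for the lemma as stated, since every leaf of a term over the signature $\{t\}$ is a variable, but it does no harm.
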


\begin{proof}
The proof proceeds by induction on the height of the term tree for $s$. The basis of the induction is trivial, because no generator of $M(1,1)$ satisfies the assumptions. Suppose that $s$ has a term tree with nonzero height. The root of the term tree is $t$ with children $s_1, s_2, s_3$. Let us suppose that each of these terms corresponds to the matrices 

\[
\Square[a_1][c_1][b_1][d_1], \Square[a_2][c_2][b_2][d_2], \Square[a_3][c_3][b_3][d_3]
\]
when evaluated. We assume that $t(a_1,a_2,a_3) = t(c_1, c_2, c_3) =a$ and that $t(b_1, b_2, b_3) \neq t(d_1, d_2, d_3)$. There are two cases to consider.

In the first case, suppose that $(a_1, a_2, a_3) = (c_1, c_2, c_3)$. In order for $t(b_1, b_2, b_3) \neq t(d_1, d_2, d_3)$, it is necessary that $(b_1, b_2, b_3) \neq (d_1, d_2, d_3)$, so the inductive assumption applies to one of the squares corresponding to $s_1, s_2, s_3)$ and we obtain the desired conclusion. 

In the second case, suppose that $(a_1, a_2, a_3) \neq (c_1, c_2, c_3)$. We assume that $t(a_1, a_2, a_3) = t(c_1, c_2, c_3)$, and because the domain of $t$ contains exactly one pair of inputs that witnesses a failure of injectivity, it must be that $(a_1, a_2, a_3) = (0,0,0)$ and $(c_1, c_2, c_3) = (1,2,0)$ or vice versa. Because $t$ does not output the values $0,1,2$, it is necessary that $s$ be of height one, and so the conclusion also holds in this case. 
\end{proof}

It follows from the above lemma that the algebra with underlying set $\mathbb{N}$ and operations $\Pol_2(\mathbb{C})$ is abelian. Indeed, no operation formed from the composition of binary polynomials of $\mathbb{C}$ can satisfy the conclusion of the lemma and so no violation of abelianness can be produced with binary polynomials. However, it is easy to see that a failure of abelianness is detectable with the binary hypercommutator, because the two matrices

\[
\SquareXY[t(0,0,0)][t(1,0,0)][t(0,0,1)][t(1,0,1)][2][1], 
\SquareXY[t(1,0,0)][t(1,2,0)][t(1,0,1)][t(1,2,1)][2][1]
\]
can be produced with binary polynomials and they share a common edge. We can generalize this kind of argument to produce an algebra that has $k$-arity commutator behavior that is not detectable with just its $(k)$-arity polynomials. The idea is to define an algebra with a single basic operation that has enough injectivity failures to ensure a failure of the condition that $[1, \dots, 1]_{TC} =0$, while also ensuring that this behavior is absent from lower arity polynomial operations. 

\begin{defn}
Let $k \geq 2$ and let $t_k: \nat^{k+1} \to \nat$ be defined by 
\[
t(x_0, x_1, x_2, \dots, x_k) = k+2
\]
for all $(x_0, x_1, x_2, \dots, x_k) \in \{(0,0), (1,2)\} \times \{0,3\} \times \dots \times \{0, k+1 \} \setminus \{(1,2, \dots, k+1)\}
$
and otherwise equal to some injective into $\nat \setminus \{0,\dots, k+2 \}$. Let $\mathbb{C}_k$ be the algebra with underlying set $\nat $ and the single basic operation $t_k$. 
\end{defn}

\begin{thm}
Let $k \geq 2$. The algebra $\mathbb{C}_k = \langle \nat ; t_k \rangle $ fails the condition $[\underbrace{1, \dots, 1}_{k} ]_{TC} = 0$, but the algebra with operations consisting of the $k$-ary polynomials of $\mathbb{C}_k$ satisfies $[\underbrace{1, \dots, 1}_k]_{TC} = 0$. 
\end{thm}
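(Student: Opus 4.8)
The plan is to mimic the structure of the $k=2$ argument (the algebra $\mathbb{C}$ together with Lemma~\ref{lem:example1}) but generalize it to all arities $k \geq 2$. First I would exhibit the failure of $[1,\dots,1]_{TC} = 0$ by writing down an explicit $(k)$-dimensional matrix $\gamma \in M(\underbrace{1,\dots,1}_k)$ witnessing a violation of $(0,k-1)$-centrality (with $k-1$ the top coordinate when $S = k = \{0,\dots,k-1\}$, or adjusting indices as in Definition~\ref{def:tccomm}). The natural candidate is the cube whose labels are $t_k$ applied coordinatewise to the generating cubes $\cube_i(1)$; by design of $t_k$, all $2^k$ vertices of $\lines_{k-1}(\gamma)$ except one are labeled by constant (hence $0$-) pairs, because $t_k$ takes the value $k+2$ on all of $\{(0,0),(1,2)\}\times\{0,3\}\times\dots\times\{0,k+1\}$ except the single tuple $(1,2,\dots,k+1)$. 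I would check that $\gamma$ is indeed generated from $\bigcup_i \cube_i(1)$ by a single application of $t_k$ (so it lies in $M(1,\dots,1)$, which is the tolerance closure, and $t_k$ is a basic operation), and that exactly $2^{k-1}-1$ of the vertices of the line-cut are nontrivial $0$-pairs — this is the combinatorial bookkeeping that the domain-set of $t_k$ was engineered to produce.

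Second, I would prove the polynomial-algebra direction by generalizing Lemma~\ref{lem:example1}. The key structural fact is: if a $(k)$-dimensional matrix of the shape needed to violate $(0,k-1)$-centrality is produced from the generators of $M(1,\dots,1)$ by a term $s \in \Clo(\mathbb{C}_k)$, then the term tree of $s$ must contain an occurrence of $t_k$ having $k+1$ mutually independent variables among its arguments. The proof is again induction on the height of the term tree: the base case holds because no generator has the required non-collapsing behavior; for the inductive step, the root is $t_k$ with children $s_0,\dots,s_k$, and one does a case analysis according to whether the tuples of values feeding into the root agree on all coordinates (push the inductive hypothesis down to a child) or disagree on some coordinate (then, since $t_k$ has exactly one pair of inputs witnessing a failure of injectivity, namely $\{(0,0),(1,2)\}\times\dots$ collapsing with $(1,2,\dots,k+1)$ absorbed, the arguments are forced into the finite exceptional set, which — because $t_k$ never outputs values in $\{0,\dots,k+1\}$ — forces $s$ to have height one, giving the conclusion directly). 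The statement of the generalized lemma needs to be formulated carefully so that the induction goes through cleanly across all $k$ coordinates of the matrix simultaneously; this formulation is where most of the care lies.

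Finally, I would conclude: a polynomial of $\mathbb{C}_k$ of arity $k$ is a term of $\mathbb{C}_k$ with all but $k$ of its variables set to constants, hence uses at most $k$ independent variables; by the generalized lemma any term producing a centrality-violating matrix must contain $t_k$ applied to $k+1$ independent variables, which $k$-ary polynomials cannot supply. Therefore no $(k)$-dimensional matrix generated by $\Pol_k(\mathbb{C}_k)$ from the generators of $M(1,\dots,1)$ can violate $(0,k-1)$-centrality with respect to the zero congruence, so $[1,\dots,1]_{TC} = 0$ holds in the polynomial algebra, while it fails in $\mathbb{C}_k$ itself by the first part.

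I expect the main obstacle to be the precise combinatorial verification in the first part — confirming that the single cube $t_k(\cube_0(1),\dots,\cube_{k-1}(1))$ really has exactly $2^{k-1}-1$ of its $\lines_{k-1}$-vertices labeled by nontrivial pairs and the rest by constant pairs — together with getting the exceptional domain set of $t_k$ to line up exactly with the hypercube coordinates, since an off-by-one in how $\{(0,0),(1,2)\}\times\{0,3\}\times\dots\times\{0,k+1\}$ is indexed against $2^k$ would break either the failure at arity $k$ or the lemma. The induction in the second part, while needing a carefully phrased hypothesis, should then be a routine adaptation of the $k=2$ proof.
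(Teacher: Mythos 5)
Your overall strategy coincides with the paper's: exhibit an explicit witness in $M(\underbrace{1,\dots,1}_k)$ for the failure in $\mathbb{C}_k$, then prove by induction on term height that any term of $\mathbb{C}_k$ producing a centrality failure must contain an occurrence of $t_k$ applied to $k+1$ independent variables, which compositions of $k$-ary polynomials cannot supply. The second half of your sketch is the paper's generalization of Lemma \ref{lem:example1} and goes through essentially as you describe.

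The concrete gap is in the witness. You propose $t_k(\cube_0(1),\dots,\cube_{k-1}(1))$, which feeds only $k$ arguments into the $(k+1)$-ary operation $t_k$; this is not merely an indexing slip. The exceptional domain $\{(0,0),(1,2)\}\times\{0,3\}\times\dots\times\{0,k+1\}$ has its first \emph{two} coordinates governed by a single two-element set of \emph{pairs}, precisely so that those two slots of $t_k$ can be filled by two \emph{different} generator cubes varying in the same hypercube direction $0$. The paper's witness is
\[
\eta = t_k\big(\cube_0(0,1),\ \cube_0(0,2),\ \cube_1(0,3),\ \dots,\ \cube_{k-1}(0,k+1)\big),
\]
so that as $f$ ranges over $2^k$ the argument tuple ranges over exactly $\{(0,0),(1,2)\}\times\{0,3\}\times\dots\times\{0,k+1\}$, giving $\eta_f = k+2$ for every $f$ except the constant-$1$ vertex. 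Without doubling up in direction $0$ you cannot realize the pair $(1,2)$ in the first two slots, and one generator cube per direction never lands in the exceptional set; this is the piece of the construction your sketch leaves unresolved. A second, smaller omission: in your inductive step the ``height one'' case does not immediately yield the conclusion, since you must still rule out $t_k$ with two of its $k+1$ slots identified (a $k$-variable term). The paper handles this by noting that the repeated output $k+2$ is attainable only on the exceptional set, that realizing it requires $k+1$ distinct generator cubes, and that a term with fewer than $k$ independent variables outputs a cube independent of some direction and hence cannot violate centrality.
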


\begin{proof}
First, consider 
\[
\eta = 
t_k
\big(
\cube_0(0,1),
\cube_0(0,2),
\cube_1(0,3), \dots ,
\cube_{k-1}(0,k+1)
\big) \in M(\underbrace{1,\dots, 1}_k).
\]
The definition of $t_k$ gives that $\eta_f = k+2$ for all $f \in 2^k$ except for the function with constant value $1$, so $\eta$ witnesses that $[\underbrace{1, \dots, 1}_k]_{TC} \neq 0$ for the algebra $\mathbb{C}_k$. 

We will now characterize the terms of $\mathbb{C}_k$ that are capable of generating a failure of $[\underbrace{1, \dots, 1}_k]_{TC}=0$. We claim that any term generating such a failure must contain an instance of $t_k$ as a subterm which does not identify any variables. The proof follows the idea from the low arity example above. No generators of $M(\underbrace{1, \dots, 1}_k)$ witness a failure, so our claim holds for terms of height equal to zero. Now suppose that $s$ is a term with nonzero height and variables $x_0, \dots x_{m-1}$ for $m \geq k$ (there must be at least $k$ variables to generate a failure of $[\underbrace{1, \dots, 1}_k]_{TC}=0$). The term tree for $s$ has $t_k$ as a root and subterms $s_0, \dots, s_k$. Suppose that $\gamma = s(a_0, \dots, a_{m-1})$ witnesses a failure of $[\underbrace{1, \dots, 1}_k]_{TC} = 0$, for some generators $a_0, \dots, a_{m-1}$. By definition, this means that all but one of the pairs of $\lines_{k-1}(\gamma)$ are constant pairs, i.e.\ that $\lines_{k-1}(\gamma)_g $ is constant for every $g \in 2^{k-1}$ except for a particular $g^* \in 2^{k-1}$. Consider $\gamma_0, \dots, \gamma_{k}$ to be the result of evaluating the generators of $\gamma$ at the variables occurring in $s_0, \dots, s_{k}$, respectively. That is, we have that 
$
\gamma = t_k(\gamma_0, \dots, \gamma_{k}).
$
There are two cases to consider.

In the first case, each of the pairs $\lines_{k-1}(\gamma)_g$ for $g \in 2^{k-1} \setminus \{g^*\}$ is constant because $\lines_{k-1}(\gamma_i)_g$ is also a constant pair, for each $i \in k+1$ and $g \in 2^{k-1} \setminus \{g^*\}$. We assume that $\lines_{k-1}(\gamma)_{g^*}$ is not a constant pair, so in this case at least one of the $\gamma_i$ must also have the property that $\lines_{k-1}(\gamma_i)_{g^*}$ is not constant, which in turn means that $\gamma_i$ witnesses a failure of $[\underbrace{1, \dots , 1}_k]_{TC} = 0$. It follows inductively that $s$ has subterm $t_k$ depending on $k+1$ many variables. 

In the second case, there exists $i \in k+1$ and $g' \in 2^{k-1} \setminus \{g^*\}$ so that $\lines_{k-1}(\gamma_i)_{g'}$ is not a constant pair. We are assuming that 
 $\lines_{k-1}(\gamma)_{g'}$ is a constant pair. This means that the terms $s_0, \dots, s_k$ must collectively output distinct values for which $t_k$ can fail to be injective. This is impossible unless each of the subterms $s_0, \dots, s_k$ is a variable.  We must now argue that the number of variables is equal to $k+1$. 

The number of variables cannot be fewer than $k$. This is because each of the generators of $M(\underbrace{1, \dots, 1}_k)$ is a cube that depends on at most one coordinate, so a term operation with fewer than $k$ independent variables will output a cube that does not depend on at least one coordinate. Such a cube cannot exhibit a failure of $[\underbrace{1, \dots, 1}_k]_{TC} = 0$. So, we have only to consider the case where $s$ is the term operation $t_k$ with exactly two variables identified. Now, the only repeated value in the operation table for $t_k$ is the value $k+2$ and it is only available as an output on $\{(0,0), (1,2)\} \times \{0,3\} \times \dots \times \{0, k+1\}$. It is clear that $k+1$ generator cubes are needed to make this work. 

So, we have characterized which terms of $\mathbb{C}_k$ are capable of producing failures of $[\underbrace{1, \dots , 1}_k]_{TC}$. None of these terms can be produced with functions from $\Pol_k(\mathbb{C}_k)$. We have therefore shown that the $k$-ary term condition commutator is not in general determined by $k$-ary polynomials. 

\end{proof}
\bibliographystyle{plain}
\bibliography{refs}
\begin{center}
  \rule{0.61803\textwidth}{0.1ex}   
\end{center}
\subjclass{MSC 08A40 (08A05, 08B05)}
\end{document}